\theoremstyle{definition}
\newtheorem{theorem}{Theorem}[section]
\newtheorem{proposition}[theorem]{Proposition}
\newtheorem{definition}[theorem]{Definition}
\newtheorem{assumption}[theorem]{Assumption}
\newtheorem*{definition*}{Definition}
\newtheorem*{example*}{Example}
\newtheorem*{problem*}{Problem}
\newtheorem*{problems*}{Problems}
\theoremstyle{remark}
\newtheorem{remark}[theorem]{Remark}
\newtheorem*{remark*}{Remark}
\newtheorem{remarks}[theorem]{Remarks}
\newtheorem*{remarks*}{Remarks}
\numberwithin{equation}{section}
\title{Propagation of space-time singularities for perturbed harmonic oscillators}
\author{Kenichi {\scshape Ito}\footnote{Department of Mathematics, Graduate School of Science, Kobe University,
1-1, Rokkodai, Nada-ku, Kobe 657-8501, Japan.
E-mail: \texttt{ito-ken@math.kobe-u.ac.jp}. 
}
\ \& 
Tomoya {\scshape Tagawa}\footnote{Graduate School of Mathematical Sciences, 
The University of Tokyo, 3-8-1 Komaba, Meguro-ku, Tokyo 153-8914, Japan.
E-mail: \texttt{tagawa-tomoya1212@g.ecc.u-tokyo.ac.jp}. 
}
}
\date{}
\begin{document}
\allowdisplaybreaks
\maketitle

\begin{abstract}
We discuss propagation of space-time singularities for the 
quantum harmonic oscillator with time-dependent metric and potential perturbations. 
Reformulating the \textit{quasi-homogeneous wave front set} according to Lascar (1977)
in a semiclassical manner, 
we obtain a characterization of its appearance in comparison with the unperturbed system.
The idea of our proof is based on the argument of Nakamura (2009), 
which was originally devised for the analysis of \textit{spatial} singularities of the Schr\"odinger equation, 
however, the application is non-trivial since the time is no more a parameter, but takes a part in the base variables.
\end{abstract}

\medskip
\noindent\textit{Keywords}:
Schr\"odinger equation, harmonic oscillator, propagation of singularities, wave front set

\medskip
\noindent\textit{Mathematics Subject Classification 2020}: Primary 35Q40; Secondary 35S05, 81Q20
\medskip



\tableofcontents

\section{Settings and results}

\subsection{Perturbed harmonic oscillator}

Let $\mathbb R^{1+d}=\mathbb R_t\times \mathbb R^d_x$ with $d\in\mathbb N=\{1,2,\ldots\}$. 
In this paper we investigate space-time singularities of a solution $u\in \mathcal S'(\mathbb R^{1+d})$ to 
the Schr\"odinger equation 
\begin{equation}
\tfrac{\partial}{\partial t}u=-\mathrm iHu
,\quad 
u(0,\cdot)=\phi\in \mathcal S'(\mathbb R^d)
,
\label{25012311}
\end{equation}
with $H$ being the Schr\"odinger operator with a perturbed harmonic potential
\begin{equation}
H=\tfrac12 p_i a_{ij}(t,x)p_j+\tfrac12 |x|^{2}+V(t,x) 
.
\label{25020420}
\end{equation}
Here $ p_i=-\mathrm i\partial/\partial x_i$ for $i=1,\dots,d$, 
and the \textit{Einstein summation convention} is adopted without tensorial superscripts. 
We assume that the perturbation is time-dependent, and of \textit{short-range} 
type as follows. Let $\mathbb N_0=\{0\}\cup\mathbb N$. 
\begin{assumption}\label{250124}
Let $a_{ij},V\in C^\infty(\mathbb R^{1+d};\mathbb R)$ for $i,j=1,\dots,d$, and assume the following.
\begin{enumerate}
\item
For each $(t,x)\in\mathbb R^{1+d}$ the matrix $(a_{ij}(t,x))_{i,j=1,\dots,d}$ is symmetric and positive definite. 
\item
There exists $\epsilon>0$ such that 
for any $\widetilde\alpha=(\alpha_0,\dots,\alpha_d)=(\alpha_0,\alpha)\in\mathbb N_0\times \mathbb N_0^d$ 
there exists $C>0$ such that for any $i,j=1,\dots,d$ and $(t,x)\in \mathbb R^{1+d}$
\[
|\partial^{\widetilde\alpha} (a_{ij}(t,x)-\delta_{ij})|\le C\langle x\rangle^{-1-|\alpha|-\epsilon}
,\quad 
|\partial^{\widetilde\alpha} V(t,x)|\le C\langle x\rangle^{1-|\alpha|-\epsilon}
,
\]
where $\delta$ is the Kronecker delta, and $\langle x\rangle=(1+|x|^{2})^{1/2}$. 
\end{enumerate}
\end{assumption}
\begin{remark}
The above assumption corresponds to the so-called short-range condition 
for the harmonic oscillator in the high-energy regime.
It actually suffices to assume these estimates locally in time, 
but for simplicity we let them be global. 
\end{remark}

Under Assumption~\ref{250124} 
the unique solvability of the Cauchy problem \eqref{25012311} 
is well-known for some more restrictive initial data. 

\begin{theorem}[{\cite[Theorem~6 and Remark~(a)]{Y}}]
Suppose Assumption~\ref{250124}, and set 
\[
\mathcal D_\pm
=\bigl\{\phi \in\mathcal S'(\mathbb R^d);\ (1+x^2+p^2)^{\pm 1/2}\phi\in L^2(\mathbb R^d)\bigr\}
,
\] 
respectively. 
Then there exists a unique family $\{U(t,s)\}_{t,s\in\mathbb R}$ of unitary operators on $L^2(\mathbb R^d)$ with the following properties.
\begin{enumerate}
\item
For any $t,s,r\in\mathbb R$, $U(t,s)U(s,r)=U(t,r)$.
\item
The mapping 
$\mathbb R^2\to\mathcal L(\mathcal D_+),\ (t,s)\mapsto U(t,s)$ is strongly continuous, where $\mathcal L(\mathcal D_+)$ denotes the space of bounded linear operators on $\mathcal D_+$. 
\item
For any $\phi\in \mathcal D_+$ the mapping 
$\mathbb R^2\to\mathcal D_-,\  (t,s)\mapsto U(t,s)\phi$ is continuously differentiable with partial derivatives
\[
\tfrac{\partial}{\partial t}U(t,s)\phi=-\mathrm iHU(t,s)\phi,\quad 
\tfrac{\partial}{\partial s}U(t,s)\phi=\mathrm iU(t,s)H\phi
.
\]
\end{enumerate}
\end{theorem}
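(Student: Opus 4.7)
The plan is to construct the propagator $U(t,s)$ via approximation, treating $H(t)$ as a perturbation of the unperturbed harmonic oscillator $H_0=\frac12(|p|^2+|x|^2)$. Note that $\mathcal D_+$ coincides with the form domain of $H_0$ equipped with the norm $\|\phi\|_{\mathcal D_+}^2=\langle\phi,(1+x^2+p^2)\phi\rangle$, and $\mathcal D_-$ is its dual; this is the natural functional-analytic setting for a Hamiltonian of quadratic type.

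First, for each fixed $t$, I would verify that $H(t)$ is essentially self-adjoint on $\mathcal S(\mathbb R^d)$, with self-adjoint realization whose form domain is $\mathcal D_+$. Since Assumption~\ref{250124} supplies $a_{ij}-\delta_{ij}=O(\langle x\rangle^{-1-\epsilon})$ and $V=O(\langle x\rangle^{1-\epsilon})$, the difference $H(t)-H_0$ is $H_0$-form bounded with small relative bound, uniformly in $t$, so Kato--Rellich (or Faris--Lavine) applies. Next, I would approximate $H(t)$ by bounded self-adjoint operators $H_n(t)$, e.g.\ via the spectral cut-off $\chi(H_0/n)H(t)\chi(H_0/n)$, generate bounded propagators $U_n(t,s)$ through the Dyson series, and let $n\to\infty$. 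The pivotal step is an \emph{energy estimate}: differentiating $t\mapsto\langle U_n(t,s)\phi,(1+H_0)U_n(t,s)\phi\rangle$ produces the commutator $\mathrm i[H_n(t),H_0]$ sandwiched between copies of $U_n\phi$, and if it is bounded by a multiple of $1+H_0$ as a quadratic form on $\mathcal D_+$, uniformly in $n$, then Gronwall's inequality yields a uniform bound on $\|U_n(t,s)\phi\|_{\mathcal D_+}$. The limit $U(t,s)$ then satisfies properties (i)--(iii), and uniqueness follows from a standard $L^2$ pairing argument against the adjoint flow.

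The hardest technical point is the commutator estimate itself. Formally $[H(t),H_0]$ contains third-order operators coming from $[p_ia_{ij}(t,x)p_j,p_k^2]$ and second-order operators with unbounded $x$-weights from $[p_ia_{ij}(t,x)p_j,x_k^2]$; neither is automatically controlled by $1+H_0$. The decay rates in Assumption~\ref{250124} are designed precisely for this purpose: the third-order term has coefficients $\partial a_{ij}=O(\langle x\rangle^{-2-\epsilon})$, absorbing the extra power of $p$, while the second-order term carries coefficients $xa_{ij}=O(\langle x\rangle^{-\epsilon})$, absorbing the extra power of $x$. Making this rigorous by a few integrations by parts (or a Weyl-calculus argument adapted to the harmonic-oscillator symbol class) produces the required bound; combined with the approximation scheme above, this completes the construction of $U(t,s)$.
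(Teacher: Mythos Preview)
The paper does not prove this theorem; it is quoted as a known result from Yajima~\cite{Y} and used as input. There is therefore no argument in the paper to compare your sketch against. Your outline---self-adjointness via relative form-boundedness, regularisation by bounded generators, an energy estimate, and Gronwall---is the standard shape for such a result and broadly in line with the literature.

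There is, however, a gap at the step you yourself flag as hardest. You assert that $\pm\mathrm i[H(t),H_0]\le C(1+H_0)$ as quadratic forms on $\mathcal D_+$, with the decay $\partial a_{ij}=O(\langle x\rangle^{-2-\epsilon})$ ``absorbing the extra power of $p$'' in the third-order piece. At the symbol level the Poisson bracket $\{\tfrac12 a_{ij}(t,x)\xi_i\xi_j,\tfrac12|\xi|^2\}$ contributes $-\tfrac12(\partial_k a_{ij})(t,x)\,\xi_i\xi_j\xi_k$, which is $O(\langle x\rangle^{-2-\epsilon}|\xi|^3)$; at any fixed $x$ with $\partial a(t,x)\neq0$ this grows like $|\xi|^3$ while $1+|x|^2+|\xi|^2$ only grows like $|\xi|^2$, so the desired form bound fails. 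Decay in $x$ cannot trade against cubic growth in $\xi$. A clean repair is to run the energy estimate with $1+H(t)$ in place of $1+H_0$: differentiating $\langle U\phi,(1+H(t))U\phi\rangle$ produces $\langle U\phi,(\partial_t H(t))U\phi\rangle$ with \emph{no} commutator, and $\partial_t H(t)=\tfrac12 p_i(\partial_t a_{ij})p_j+\partial_t V$ is genuinely of second order with bounded kinetic coefficients and sub-quadratic potential part, hence form-bounded by $1+H(t)\simeq 1+H_0$. This yields the Gronwall inequality you want, and the remainder of your scheme then goes through.
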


Throughout the paper we consider $U(t,s)$ only for $s=0$, and 
thus denote $U(t)=U(t,0)$ for short. 
Now for any $\phi\in\mathcal D_+$ we are going to discuss the singularities of 
$u\in C_{\mathrm b}(\mathbb R;L^2(\mathbb R^d))\subset \mathcal S'(\mathbb R^{1+d})$
given by 
\begin{equation}
u(t,x)=(U(t)\phi)(x)
. 
\label{25012917}
\end{equation}
To be more precise, we characterize them in terms of 
those of the unperturbed solution 
$u_{\mathrm{os}}\in C_{\mathrm b}(\mathbb R;L^2(\mathbb R^d))\subset \mathcal S'(\mathbb R^{1+d})$ 
given by 
\begin{equation}
u_{\mathrm{os}}(t,x)=(\mathrm e^{-\mathrm itH_{\mathrm{os}}}\phi)(x)
,
\quad 
H_{\mathrm{os}}=\tfrac12p^2+\tfrac{1}{2}|x|^2=-\tfrac12\Delta +\tfrac{1}{2}|x|^2. 
\label{250204}
\end{equation}

\subsection{Quasi-homogeneous wave front set}

For the analysis of space-time singularities of the Schr\"odinger equation \eqref{25012311} 
it is more natural and appropriate to introduce the \textit{quasi-homogeneous wave front set} following Lascar~\cite{L}.
Recall that 
for any $n\in\mathbb N$
the \textit{Weyl quantization} of a symbol $a\in C^\infty_{\mathrm c}(\mathbb R^{2n})$ 
is defined through the formula 
\begin{align*}
&
a^{\mathrm W}(z,p_z)v(z)
=(2\pi)^{-n}\int_{\mathbb R^{2n}}\mathrm e^{\mathrm i(z-w)\zeta}
a\bigl(\tfrac12(z+w),\zeta\bigr)v(w)\,\mathrm dw\mathrm d\zeta
, 
\quad 
v\in\mathcal S'(\mathbb R^{n})
.
\end{align*}

\begin{definition}
For any $v\in \mathcal S'(\mathbb R^{1+d})$ define the 
\textit{quasi-homogeneous wave front set of order} $\theta\in (0,\infty)$: 
\[\mathop{\mathrm{WF}}\nolimits^{\mathrm{qh}}_\theta (v)
\subset \mathbb R^{1+d}\times \mathbb S^{d}\] 
as the complement of the set of all $(s,y,\sigma,\eta)\in\mathbb R^{1+d}\times \mathbb S^d$ 
such that 
there exists $a\in C^\infty_{\mathrm c}(\mathbb R^{2(1+d)})$  satisfying $a(s,y,\sigma,\eta)\neq 0$ and 
\[
\|a^{\mathrm W}(t,x,h^\theta p_t,hp_x)v\|_{L^2_{t,x}}=\mathcal O(h^\infty) \quad \text{as }h\to +0
.
\]
\end{definition}

It is clear that $\mathop{\mathrm{WF}}\nolimits^{\mathrm{qh}}_1 (v)$ 
coincides with the ordinary wave front set $\mathop{\mathrm{WF}}(v)$. 
However, $\mathop{\mathrm{WF}}\nolimits^{\mathrm{qh}}_\theta (v)$ for $\theta>1$ 
refines $\mathop{\mathrm{WF}}(u)$ at the 
north and the south poles of the cosphere bundle $S^*\mathbb R^{1+d}\cong \mathbb R^{1+d}\times \mathbb S^{d}$,
while it degrades the rest down to the equator. 
Let us state it more precisely. Set 
\begin{align*}
\mathcal P=\{(t,x,\pm 1,0)\in \mathbb R^{1+d}\times \mathbb S^d\}
,\quad
\mathcal E&=\{(t,x,0,\xi)\in \mathbb R^{1+d}\times \mathbb S^d\}
.
\end{align*}

\begin{proposition}\label{250129}
For any $v\in \mathcal S'(\mathbb R^{1+d})$ and $0<\rho<\theta<\infty$ 
the following holds. 
\begin{enumerate}
\item
If $\mathop{\mathrm{WF}}\nolimits^{\mathrm{qh}}_{\theta} (v)\cap \mathcal E^c\neq\emptyset$, 
then $\mathop{\mathrm{WF}}\nolimits^{\mathrm{qh}}_\rho (v)\cap \mathcal P\neq\emptyset$. 
\item
If $\mathop{\mathrm{WF}}\nolimits^{\mathrm{qh}}_\rho (v)\cap\mathcal P^c\neq\emptyset$, 
then $\mathop{\mathrm{WF}}\nolimits^{\mathrm{qh}}_{\theta} (v)\cap \mathcal E\neq\emptyset$. 
\end{enumerate}
\end{proposition}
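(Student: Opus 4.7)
The plan is to establish both assertions by contraposition via a pseudodifferential rescaling. The key observation I would exploit is that $a^{\mathrm W}(t,x,h^\theta p_t,hp_x)$ is the ordinary Weyl quantization of $\tilde a(t,x,\tau,\xi):=a(t,x,h^\theta\tau,h\xi)$, so switching the quasi-homogeneity index amounts to a rescaling of the symbol in $(\tau,\xi)$. Before starting, I would record a ``ray freedom'' consequence of the same observation: the $\theta$-anisotropic dilation $b(\cdot,\cdot,\sigma,\eta)\mapsto b(\cdot,\cdot,\lambda^\theta\sigma,\lambda\eta)$ combined with $h\mapsto h/\lambda$ produces the same operator, so the test symbol supplied by the hypothesis may be taken nonzero at any point of the relevant quasi-homogeneous ray, not only at its spherical normalization.

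For (i) I would fix $(s,y,\sigma_0,\eta_0)\in\mathcal E^c$ (so $\sigma_0\neq0$), observe that its $\rho$-ray passes through the pole $(s,y,\mathrm{sgn}(\sigma_0),0)\in\mathcal P$, and (using the hypothesis together with the ray freedom) pick $b\in C^\infty_{\mathrm c}$ with $b(s,y,\sigma_0,0)\neq0$ and $\|b^{\mathrm W}(t,x,h^\rho p_t,hp_x)v\|_{L^2}=\mathcal O(h^\infty)$. Choosing $a\in C^\infty_{\mathrm c}$ with $a(s,y,\sigma_0,\eta_0)\neq0$ and sufficiently small support, and introducing the auxiliary scale $h':=h^{\rho/\theta}$ so that $h'^\theta=h^\rho$, I would show that on the support of $\tilde a_{h'}(t,x,\tau,\xi)=a(t,x,h'^\theta\tau,h'\xi)$ the rescaled $b$-symbol $\tilde b_h(t,x,\tau,\xi)=b(t,x,h'^\theta\tau,h'^{\theta/\rho}\xi)$ is bounded away from zero, since the extra factor $h'^{\theta/\rho-1}$ in the $\eta$-argument tends to $0$ (as $\theta/\rho>1$) and the symbol is evaluated asymptotically at $b(s,y,\sigma_0,0)\neq0$. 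In H\"ormander's Weyl calculus with the admissible metric $g=|dz|^2+h'^{2\theta}\,d\tau^2+h'^2\,|d\xi|^2$, both $\tilde a_{h'}$ and $\tilde b_h$ lie uniformly in $S(1,g)$, and a Beals-type parametrix construction delivers symbols $c_{h'}\in S(1,g)$ and $r_{h'}\in h'^\infty S(1,g)$ with $\tilde a_{h'}=c_{h'}\#\tilde b_h+r_{h'}$. Combining the Calder\'on--Vaillancourt boundedness of $c_{h'}^{\mathrm W}$ with the hypothesis on $b$ then yields $\|a^{\mathrm W}(t,x,h'^\theta p_t,h'p_x)v\|_{L^2}=\mathcal O(h'^\infty)=\mathcal O(h^\infty)$, as required.

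The argument for (ii) will be the mirror image, with the roles of $\sigma$ and $\eta$ exchanged and the same parameter $h$ kept on both sides: for $(s,y,\sigma_0,\eta_0)\in\mathcal P^c$ ($\eta_0\neq0$) I would pick $b$ nonzero at $(s,y,0,\eta_0)$ with the $\theta$-scale estimate, and observe that on the support of $\tilde a_h(t,x,\tau,\xi)=a(t,x,h^\rho\tau,h\xi)$ the rescaled symbol $\tilde b_h(t,x,\tau,\xi)=b(t,x,h^\theta\tau,h\xi)$ satisfies $h^\theta\tau=h^{\theta-\rho}(h^\rho\tau)\to0$ as $h\to0$, and is therefore elliptic there. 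The same parametrix construction in $S(1,g)$ with $g=|dz|^2+h^{2\theta}\,d\tau^2+h^2\,|d\xi|^2$ then completes the proof.

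The main obstacle I anticipate lies not in the strategy itself but in the technical verification that the rescaled symbols, together with all the terms produced by the Beals iteration, lie uniformly in a single H\"ormander class $S(1,g)$ for a slowly varying and temperate metric $g$ simultaneously adapted to the two coexisting scales. Identifying such a metric and carrying out the seminorm bookkeeping across the composition formula and the iterative parametrix is the main effort, even though each individual ingredient (the composition formula, the $L^2$-boundedness, and the asymptotic inversion) is standard.
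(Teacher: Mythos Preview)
The paper does not prove this proposition; it simply remarks that the proof ``is not difficult'' and omits it. Your contrapositive strategy---rescaling the test symbol so as to compare the two anisotropic quantizations, then invoking a semiclassical parametrix in a constant H\"ormander metric---is sound and is presumably what the authors have in mind.

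There is one small slip in part~(ii). The metric $g=|dz|^2+h^{2\theta}\,d\tau^2+h^2\,|d\xi|^2$ you specify is adapted to $\tilde b_h$ but does \emph{not} contain $\tilde a_h(t,x,\tau,\xi)=a(t,x,h^\rho\tau,h\xi)$, since $|\partial_\tau\tilde a_h|=O(h^\rho)$, which is strictly worse than the required $O(h^\theta)$ when $\rho<\theta$. The parametrix construction should instead take place in the coarser class $S(1,g_\rho)$ with $g_\rho=|dz|^2+h^{2\rho}\,d\tau^2+h^2\,|d\xi|^2$: both symbols lie there (because $h^\theta\le h^\rho$ for small $h$), the metric is constant hence slowly varying and temperate, and its Planck constant $\max(h^\rho,h)$ still tends to zero. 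This is exactly parallel to part~(i), where you correctly chose the metric adapted to $\tilde a_{h'}$---that is, the coarser of the two available scales. With this one-character correction your argument goes through unchanged.
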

\begin{remarks}\label{250207}
\begin{enumerate}
\item
These assertions exactly say that 
$\mathop{\mathrm{WF}}\nolimits^{\mathrm{qh}}_{\theta} (v)$ refines 
$\mathop{\mathrm{WF}}\nolimits^{\mathrm{qh}}_\rho (v)$ at $\mathcal P$,
and 
$\mathop{\mathrm{WF}}\nolimits^{\mathrm{qh}}_\rho (v)$ refines $\mathop{\mathrm{WF}}\nolimits^{\mathrm{qh}}_{\theta} (v)$ at $\mathcal E$. 
In the terminology of Melrose~\cite{M}, $\mathop{\mathrm{WF}}\nolimits^{\mathrm{qh}}_{\theta} (v)$ is simultaneously a \textit{blow-up} and a \textit{blow-down} 
of $\mathop{\mathrm{WF}}\nolimits^{\mathrm{qh}}_\rho (v)$ at $\mathcal P$ and $\mathcal E$, respectively. 
\item\label{25020715}
We could further refine the assertion 1 in terms of the northern/southern hemisphere and the north/south pole, respectively, 
and the assertion 2 in terms of longitudes, but we omit them.  
\end{enumerate}
\end{remarks}

The proof of Proposition~\ref{250129} is not difficult, and we omit it. 

We clearly see that the principal part of the Schr\"odinger equation \eqref{25012311} 
is \textit{quasi-homogeneous} in the $t$- and the $x$-derivatives, 
and we should investigate the quasi-homogeneous wave front set of order $\theta=2$
for a solution $u$. In fact, $\theta=2$ is the critical order in the following sense.
\begin{theorem}\label{25012918}
Suppose Assumption~\ref{250124}, 
let $\phi\in \mathcal D_+$, and let $u\in\mathcal S'(\mathbb R^{1+d})$ be given by \eqref{25012917}.
Then 
\[
\mathop{\mathrm{WF}}\nolimits^{\mathrm{qh}}_\theta (u)
\subset
\begin{cases} 
\mathcal P& \text{if }\theta\in (0,2),\\ 
\bigl\{
\bigl(t,x,-\tfrac12\mu^2a_{ij}(t,x)\xi_i\xi_j,\mu\xi\bigr)
\in \mathbb R^{1+d}\times \mathbb S^d\bigr\}&\text{if }\theta=2,\\ 
\mathcal E &\text{if }\theta\in (2,\infty),
\end{cases}
\]
where $\mu=\mu(t,x,\xi)$ is a positive normalization factor satisfying 
\begin{equation}
\tfrac14\mu^4(a_{ij}(t,x)\xi_i\xi_j)^2+\mu^{2}|\xi|^2=1.
\label{25122717}
\end{equation}
\end{theorem}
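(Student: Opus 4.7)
The off-critical statements reduce to the critical one. Writing
\[
\Sigma := \bigl\{(t,x,\sigma,\eta) \in \mathbb{R}^{1+d}\times\mathbb{S}^d \,:\, \sigma + \tfrac12 a_{ij}(t,x)\eta_i\eta_j = 0\bigr\},
\]
the parametrization $(\sigma,\eta) = (-\tfrac12\mu^2 a_{ij}\xi_i\xi_j,\mu\xi)$ with the normalization \eqref{25122717} identifies $\Sigma$ with the set displayed at $\theta = 2$. Positive-definiteness of $(a_{ij})$ yields $\Sigma \cap \mathcal{P} = \emptyset$ (if $\eta = 0$ then $\sigma = 0$) and $\Sigma \cap \mathcal{E} = \emptyset$ (if $\sigma = 0$ then $\eta = 0$); once $\mathop{\mathrm{WF}}\nolimits^{\mathrm{qh}}_2(u) \subset \Sigma$ is established, the contrapositives of Proposition~\ref{250129}, applied with $\theta = 2$ and with $\rho = 2$, then yield the two off-critical cases.

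For the critical case, I fix a non-characteristic point $(s_0,y_0,\sigma_0,\eta_0) \notin \Sigma$ and seek $a \in C_c^\infty(\mathbb{R}^{2(1+d)})$ with $a(s_0,y_0,\sigma_0,\eta_0) \neq 0$ such that $\|a^{\mathrm W}(t,x,h^2 p_t,h p_x) u\|_{L^2_{t,x}} = \mathcal{O}(h^\infty)$. The space-time Schr\"odinger operator $P = p_t + H$ annihilates $u$; in the quasi-homogeneous scaling $p_t \sim h^{-2},\,p_x \sim h^{-1}$, the operator $h^2 P$ has principal symbol $p_0 = \sigma + \tfrac12 a_{ij}(t,x)\eta_i\eta_j$, while $\tfrac{h^2}{2}|x|^2$ and $h^2 V$ are of order $h^2$, strictly subprincipal (once a spatial cutoff localizes $|x|$). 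Choosing $a$ supported in a small neighborhood of $(s_0,y_0,\sigma_0,\eta_0)$ on which $p_0$ is bounded away from zero, the theorem becomes an elliptic-type estimate.

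My plan is to implement this in the Heisenberg-picture spirit of Nakamura, avoiding a global parametrix. The key operator identity is $h^2 p_t u = -h^2 H u$, allowing one to convert occurrences of $h^2 p_t$ acting on $u$ into the spatial operator $-h^2 H$ acting on $u$. Combined with a semiclassical symbolic calculus on space-time, this reduces $a^{\mathrm W}(t,x,h^2 p_t, h p_x)u$, modulo order-$h^2$ errors of the same form, to a spatial operation $a_t^{\mathrm W}(x,hp_x)\,U(t)\phi$, where $a_t(x,\eta) := a(t,x,-\tfrac12 a_{ij}(t,x)\eta_i\eta_j,\eta)$ is the restriction of $a$ to the classical dispersion surface. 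Since $\mathop{\mathrm{supp}} a$ is disjoint from $\Sigma$, this slice $a_t$ is identically zero, so the spatial operator has vanishing symbol and is $\mathcal{O}(h^\infty)$. The remaining order-$h^2$ error is of the same form with a new symbol, and iteration yields the $\mathcal{O}(h^\infty)$ estimate. An Egorov-type control on $U(t)$ under Assumption~\ref{250124} is used to keep this iteration stable across the $t$-integration.

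The principal obstacle, as highlighted in the abstract, is precisely that $t$ is a base variable: the combined quantization $a^{\mathrm W}(t,x,h^2 p_t, h p_x)$ mixes $t$- and $x$-oscillations, and a naive fixed-$t$ spatial estimate does not give an $L^2_{t,x}$ bound. One must build the quasi-homogeneous semiclassical pseudo-differential calculus on $\mathbb{R}^{1+d}$ with the scaling $(h^2,h)$ in the fiber, verify composition and remainder formulas uniformly in $t$, and carefully couple the symbolic dispersion $\sigma \leftrightarrow -\tfrac12 a_{ij}\eta_i\eta_j$ to the operator identity $p_t u = -Hu$. The short-range decay of $a_{ij} - \delta_{ij}$ and $V$ in Assumption~\ref{250124} is what makes the resulting composition and Egorov errors summable across the $t$-integration, permitting the iteration to $\mathcal{O}(h^\infty)$.
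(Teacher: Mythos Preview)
Your reduction of the off-critical cases to the critical one via Proposition~\ref{250129} is correct and is a pleasant shortcut; the paper does not do this but instead runs the ellipticity argument separately for each $\theta>0$ (for $\theta<2$ the principal symbol of $h^2P$ in the $(h^\theta,h)$ calculus is $\tfrac12 a_{ij}\eta_i\eta_j$, for $\theta>2$ it is $\tau$), which is equally short.

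For $\theta=2$ you correctly identify the mechanism: $h^2P$ has quasi-homogeneous principal symbol $p_0=\sigma+\tfrac12 a_{ij}(t,x)\eta_i\eta_j$, the potential terms $\tfrac{h^2}{2}|x|^2+h^2V$ are of order $h^2$ after a compact $(t,x)$-cutoff, and one wants an elliptic estimate at a point where $p_0\neq 0$. The paper simply says: construct a microlocal parametrix in the quasi-homogeneous semiclassical calculus (set $q_0=a/p_0$, iterate to kill lower-order errors, obtain $Q$ with $Q\cdot(h^2P)=a^{\mathrm W}+\mathcal O(h^\infty)$ and $Q$ bounded on $L^2_{t,x}$), then apply to $u$ using $Pu=0$. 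This is textbook (the paper cites Martinez) and is entirely local in space-time; no Egorov, no time-slicing, no $t$-integration issues arise.

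Your last two paragraphs overcomplicate this. The specific step ``reduce $a^{\mathrm W}(t,x,h^2p_t,hp_x)u$ to the spatial operator $a_t^{\mathrm W}(x,hp_x)U(t)\phi$ with $a_t(x,\eta)=a(t,x,-\tfrac12 a_{ij}\eta_i\eta_j,\eta)$'' is not a well-defined operation: $a^{\mathrm W}(t,x,h^2p_t,hp_x)$ genuinely integrates in $t$, and substituting $h^2p_t\mapsto -\tfrac12 a_{ij}(h p_x)_i(h p_x)_j$ inside the symbol is exactly what the parametrix does \emph{at the symbolic level}, not a reduction to a family of fixed-$t$ operators. The ``Egorov-type control on $U(t)$'' and ``summability across the $t$-integration'' you invoke are the genuine difficulties of the main propagation result (Theorem~\ref{250208}), where one must connect two far-apart microlocal regions via the flow; they are irrelevant for the present purely local ellipticity statement. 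Drop that machinery and run the standard parametrix.
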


For the proof of Theorem~\ref{25012918} 
we have only to reformulate the \textit{microlocal ellipticity} 
associated with a semiclassical quantization 
$a^{\mathrm W}(t,x,h^\theta p_t,hp_x)$ for each given $\theta>0$, 
and to construct a \textit{parametrix} in the standard manner. 
See, e.g., a textbook by Martinez \cite{Mar}. Let us omit the details.

\subsection{Classical high-energy asymptotics}

The propagation of singularities for \eqref{25012311} is described by 
the high-energy limit of the corresponding classical mechanics with time-dependent 
classical Hamiltonian
\[K(t,x,\xi)=\tfrac12a_{ij}(t,x)\xi_i\xi_j+\tfrac12|x|^2+V(t,x),\quad (t,x,\xi)\in\mathbb R^{2(1+d)}. \]
However, due to our short-range assumption 
we can drop the potential and even the time-dependence, 
so that it in fact reduces to a classical Hamiltonian  
\begin{equation}
K_s(x,\xi)=\tfrac12a_{ij}(s,x)\xi_i\xi_j,\quad (x,\xi)\in\mathbb R^{2d},
\label{251227}
\end{equation}
for fixed $s\in\mathbb R$. Let us take a closer look at \eqref{251227}. 
Denote by 
\begin{equation}
(x(t),\xi(t))=(x(t;s,y,\eta),\xi(t;s,y,\eta))
\label{250620}
\end{equation}
a solution to the Hamilton equations
\begin{equation}
  \dot x_i=a_{ij}(s,x)\xi_j,\ \  
\dot\xi_i=-\tfrac12(\partial_ia_{jk}(s,x))\xi_j\xi_k
\quad \text{for }i=1,\dots,d
\label{202511302324}
\
\end{equation}
with initial condition $(x(s),\xi(s))=(y,\eta)\in\mathbb R^{2d}$.

\begin{definition}\label{25080316}
Initial data $(s,y,\eta)\in \mathbb R^{1+2d}$ is said to be \textit{non-trapping} if  
\[
\lim_{\lambda\to\infty} |x(0;s,y,\lambda\eta)|=\infty
.
\]
In addition, we denote by $\Omega\subset \mathbb R^{1+2d}$ the set of all non-trapping initial data.
\end{definition}

Our characterization of the quasi-homogeneous wave front set 
for \eqref{25012917} involves the following classical scattering data in the high-energy limit. 

\begin{proposition}[{\cite[Lemma 3]{N2}}]\label{250123}
Suppose Assumption~\ref{250124}.
Then $\Omega\subset \mathbb R^{1+2d}$ is an open subset. 
Moreover, for any $(s,y,\eta)\in \Omega$ there exist the limits 
\begin{align*}
x_+(s,y,\eta)&:=\lim_{\lambda\to\infty}(x(0,s,y,\lambda\eta)+s\xi(0,s,y,\lambda\eta)),
\\
\xi_+(s,y,\eta)&:=\lim_{\lambda\to\infty}\lambda^{-1}\xi(0,s,y,\lambda\eta)
.
\end{align*}
\end{proposition}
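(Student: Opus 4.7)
\medskip
\noindent\textbf{Proof proposal.}
The plan is to exploit the $\xi$-homogeneity of $K_s$ to convert the high-energy limit into a large-time limit of a single classical trajectory, and then apply classical short-range scattering theory. Since $K_s$ is homogeneous of degree $2$ in $\xi$, direct inspection of \eqref{202511302324} shows that for any $\lambda>0$
\[
x(t;s,y,\lambda\eta)=x\bigl(s+\lambda(t-s);s,y,\eta\bigr),\quad
\xi(t;s,y,\lambda\eta)=\lambda\,\xi\bigl(s+\lambda(t-s);s,y,\eta\bigr).
\]
Setting $\tau:=s(1-\lambda)$, so that $s-\tau=s\lambda$ and $\tau\to-\mathrm{sign}(s)\cdot\infty$ as $\lambda\to\infty$, this specializes to $x(0;s,y,\lambda\eta)=x(\tau;s,y,\eta)$ and $\xi(0;s,y,\lambda\eta)=\lambda\,\xi(\tau;s,y,\eta)$. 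The non-trapping hypothesis thus translates into escape of the fixed-energy trajectory $(x(\tau),\xi(\tau))$ to spatial infinity as $\tau\to\mp\infty$ with sign opposite to that of $s$; in particular $\Omega$ requires $s\neq 0$ and $\eta\neq 0$.

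The key step is upgrading this escape to a linear lower bound $|x(\tau)|\geq c|\tau|$ for $|\tau|$ large of the relevant sign. Conservation of $K_s$ together with positive-definiteness of $a_{ij}$ first keeps $|\xi(t)|$ bounded and bounded away from $0$. The Poisson-bracket identity
\[
\tfrac{\mathrm d}{\mathrm dt}(x\cdot\xi)=2K_s(x,\xi)-\tfrac12\bigl(x_i\partial_i a_{jk}(s,x)\bigr)\xi_j\xi_k,
\]
combined with Assumption~\ref{250124} and conservation of energy, gives $\tfrac{\mathrm d}{\mathrm dt}(x\cdot\xi)\geq K_s(y,\eta)>0$ once $|x|$ is sufficiently large. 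A continuity/contradiction argument (no subsequence $\tau_n\to\mp\infty$ can keep $|x(\tau_n)|$ bounded) then rules out any oscillatory escape and produces the linear bound. This Mourre-type step is the only non-routine ingredient.

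Once linear escape is in hand, Assumption~\ref{250124} and \eqref{202511302324} yield the pointwise bounds
\[
|\dot\xi(\tau)|\lesssim\langle x(\tau)\rangle^{-2-\epsilon}\lesssim|\tau|^{-2-\epsilon},\quad
|\dot x(\tau)-\xi(\tau)|\lesssim\langle x(\tau)\rangle^{-1-\epsilon}\lesssim|\tau|^{-1-\epsilon},
\]
both integrable at infinity. Hence $\xi_\infty:=\lim_\tau\xi(\tau)$ exists with $|\xi(\tau)-\xi_\infty|=O(|\tau|^{-1-\epsilon})$, and $x_\infty:=\lim_\tau(x(\tau)-\tau\xi_\infty)$ exists. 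Direct computation then gives
\[
x(0;s,y,\lambda\eta)+s\,\xi(0;s,y,\lambda\eta)=x(\tau)+(s-\tau)\xi(\tau)=\bigl(x(\tau)-\tau\xi_\infty\bigr)+s\xi_\infty+(s-\tau)\bigl(\xi(\tau)-\xi_\infty\bigr),
\]
where the last term is $O(|\tau|\cdot|\tau|^{-1-\epsilon})=O(|\tau|^{-\epsilon})\to 0$, while $\lambda^{-1}\xi(0;s,y,\lambda\eta)=\xi(\tau)\to\xi_\infty$. This identifies $x_+(s,y,\eta)=x_\infty+s\xi_\infty$ and $\xi_+(s,y,\eta)=\xi_\infty$.

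Finally, openness of $\Omega$ follows from continuous dependence of the ODE flow on initial data together with the uniformity of the Mourre-type estimate in a conical neighborhood of any already-escaped state: for $(s_0,y_0,\eta_0)\in\Omega$, choose $\lambda_0$ so large that $(x(\tau_0;s_0,y_0,\eta_0),\xi(\tau_0;s_0,y_0,\eta_0))$ lies in the region where the linear escape bound holds uniformly, and continuity transfers the non-trapping property to a neighborhood of $(s_0,y_0,\eta_0)$. The main obstacle throughout is securing the linear escape bound in the second step; with that in place, all remaining estimates reduce to integration of short-range tails.
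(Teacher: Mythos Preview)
The paper does not prove this proposition itself; it is quoted from Nakamura~\cite[Lemma~3]{N2}, so there is no in-paper argument to compare against directly. Your argument is correct and is the standard one: the quadratic homogeneity of $K_s$ in $\xi$ converts the high-energy limit in $\lambda$ into a large-time limit $\tau=s(1-\lambda)\to\mp\infty$ along a single fixed-energy trajectory, after which a Mourre/convexity estimate gives linear escape, and the short-range bounds of Assumption~\ref{250124} make $\dot\xi$ and $\dot x-\xi$ integrable at infinity. The identification $x_+=x_\infty+s\xi_\infty$, $\xi_+=\xi_\infty$ via your telescoping decomposition is clean, and the openness argument by continuity of the flow plus uniformity of the escape region is the usual one. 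The Mourre step you single out as the only non-routine ingredient is precisely the mechanism the paper itself deploys later in Proposition~\ref{250804} for the related flow $(z_\lambda,\gamma_\lambda)$, so your proof is entirely in the spirit of the surrounding material.
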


\subsection{Main result}

Now we present our main result 
that characterizes the quasi-homogeneous wave front set of a solution $u$ to \eqref{25012311}. 
Note that by Theorem~\ref{25012918} we may only consider the critical order $\theta=2$. 
Moreover, as in Theorem~\ref{25012918} again, we can parametrize 
all possible points of $\mathop{\mathrm{WF}}\nolimits^{\mathrm{qh}}_2(u)$ 
by $(t,x,\xi)\in \mathbb R^{1+d}\times (\mathbb R^d\setminus\{0\})$, setting  
\[
\Pi(t,x,\xi)=
\bigl(t,x, -\tfrac12\mu^2a_{ij}(t,x)\xi_i\xi_j,\mu\xi\bigr)
\in \mathbb R^{1+d}\times \mathbb S^d.\]
Here $\mu$ is the positive normalization factor  
satisfying \eqref{25122717}, or 
\[
\mu=\mu(t,x,\xi)=\Bigl(\sqrt{2}\left(\bigl[\xi^4+(a_{ij}(t,x)\xi_i\xi_j)^2\bigr]^{1/2}+\xi^2\right)\Bigr)^{-1/2}
.
\]
In particular, for the unperturbed case we denote it by    
\[
\Pi_{\mathrm{os}}(t,x,\xi)=\Bigl(t,x, -\bigl(2^{1/2}-1\bigr),\bigl(2^{3/2}-2\bigr)^{1/2}\xi\big/|\xi|\Bigr)
\in \mathbb R^{1+d}\times \mathbb S^d
.\]

\begin{theorem}\label{250208}
Suppose Assumption~\ref{250124}.
For any $\phi\in\mathcal D_+$ let 
$u,u_{\mathrm{os}}\in \mathcal S'(\mathbb R^{1+d})$ be from 
\eqref{25012917}, \eqref{250204}, respectively,
and for any $(s,y,\eta)\in \Omega$ with $ s \in (-\pi,\pi)$ let 
\[x_+=x_+(s,y,\eta),\quad \xi_+=\xi_+(s,y,\eta)\]
be from Proposition~\ref{250123}.
Then one has 
\[
\Pi(s,y,\eta)\in \mathop{\mathrm{WF}}\nolimits^{\mathrm{qh}}_2(u)
\quad
\text{if and only if} 
\quad
\Pi_{\mathrm{os}}(s,x_+,\xi_+)\in\mathop{\mathrm{WF}}\nolimits^{\mathrm{qh}}_2(u_{\mathrm{os}}).
\]
\end{theorem}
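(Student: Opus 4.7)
The plan is to adapt the semiclassical argument of Nakamura~\cite{N2}---originally designed for purely spatial singularities against a free-Laplacian background---to the present quasi-homogeneous setting, in which time enters the base variables of the wavefront set and the unperturbed background is the harmonic oscillator. I work throughout in the Weyl calculus with the scaling $(h^2 p_t, h p_x)$ corresponding to $\theta = 2$.

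The first step is to reduce the space-time wavefront condition to a spatial semiclassical condition at the fixed time $t = s$. Introducing a smooth time cutoff $\chi(t)$ with $\chi(s) \ne 0$ and exploiting the Schr\"odinger equation to identify $h^2 p_t$ with $-h^2 H$ on the characteristic surface of $p_t + H$, the assertion $\Pi(s,y,\eta) \notin \mathop{\mathrm{WF}}\nolimits^{\mathrm{qh}}_2(u)$ becomes equivalent to the existence of $b \in C^\infty_{\mathrm c}(\mathbb R^{2d})$ with $b(y,\eta) \ne 0$ such that
\[
\|b^{\mathrm W}(x, h p_x)\, U(s)\phi\|_{L^2(\mathbb R^d)} = \mathcal O(h^\infty) \quad \text{as } h \to +0,
\]
and analogously for $u_{\mathrm{os}}$ at $(x_+,\xi_+)$. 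Next, I propagate the two test symbols backward from time $s$ to time $0$ by a semiclassical Egorov theorem: one symbol evolves under the full perturbed classical flow generated by $K(t,x,\xi) + \tfrac12|x|^2 + V$, the other under the backward free harmonic flow $\Phi_{\mathrm{os}}^{-s}$. The iff of the theorem will follow once I verify that, for $c$ supported near $(x_+,\xi_+)$, the two propagated symbols localize the same rays in the high-energy cotangent bundle.

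The matching is carried out via the classical scattering data of Proposition~\ref{250123}. Under Assumption~\ref{250124} and the non-trapping hypothesis $(s,y,\eta) \in \Omega$, the full backward trajectory starting from $(y, \lambda\eta)$ at time $s$ converges, after rescaling in the limit $\lambda \to \infty$, to the frozen-time kinetic trajectory of $K_s(x,\xi) = \tfrac12 a_{ij}(s,x)\xi_i\xi_j$, with image $(x_+ - s\lambda\xi_+, \lambda\xi_+) + o(\lambda)$. Symmetrically, at this scale the confining potential $\tfrac12|x|^2$ is of lower order than the kinetic energy, so the backward harmonic rotation $\Phi_{\mathrm{os}}^{-s}(x_+, \lambda\xi_+)$ degenerates in the same limit to the same free kinetic translation $(x_+ - s\lambda\xi_+, \lambda\xi_+) + o(\lambda)$. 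The hypothesis $s \in (-\pi,\pi)$ enters here to guarantee that the harmonic rotation does not complete a half-period, keeping this high-energy approximation by free translation uniformly valid.

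\textbf{Main obstacle.} The principal difficulty is to render the high-energy matching of the previous paragraph rigorous while simultaneously controlling quasi-homogeneous symbols depending on all of $(t, x, h^2 p_t, h p_x)$ with an $\mathcal O(h^\infty)$ remainder. In Nakamura's original argument the unperturbed dynamics is trivial translation, so the analogous matching is essentially tautological; here the confining potential turns $e^{-isH_{\mathrm{os}}}$ into a nontrivial phase-space rotation, and one must control the Egorov propagation under an unbounded, time-dependent perturbation while carefully tracking how the mixed scaling of time-frequency $h^2 p_t$ against spatial momentum $h p_x$ interacts with the semiclassical symbol calculus at every stage.
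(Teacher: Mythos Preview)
Your first step---reducing $\Pi(s,y,\eta)\notin\mathop{\mathrm{WF}}\nolimits^{\mathrm{qh}}_2(u)$ to the purely spatial condition $\|b^{\mathrm W}(x,hp_x)\,U(s)\phi\|_{L^2_x}=\mathcal O(h^\infty)$---is asserted but not proved, and it is precisely the step the paper is designed to circumvent. The quasi-homogeneous condition is an $L^2_{t,x}$ statement in which $t$ is a genuine base variable; passing to a pointwise-in-$t$ $L^2_x$ condition is a restriction-type claim that does not follow from ``identifying $h^2p_t$ with $-h^2H$ on the characteristic surface'' alone. You would still need to show that smallness in the integrated norm near $t=s$ forces smallness of the time slice, and conversely that a spatial bound at $t=s$ upgrades to a space-time bound with the correct quasi-homogeneous localization in $\tau$. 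Neither direction is immediate, and the paper emphasizes (end of Section~\ref{202512081453}) that because $t$ is integrated one \emph{cannot} use it as the interpolation parameter. If your reduction were available, the theorem would indeed collapse via Mao--Nakamura~\cite{MN} to the known spatial result; the authors note in the remarks after Theorem~\ref{250208} that they could not obtain such a direct description for the harmonic oscillator.

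The paper proceeds the other way entirely. It never freezes $t=s$; instead it introduces an \emph{external} interpolation parameter $\kappa\in[0,1]$ and considers
\[
I(\kappa)=\bigl\langle \mathrm e^{-\mathrm i\kappa tH_{\mathrm{os}}}U((1-\kappa)t)\phi,\ B(\kappa)\,\mathrm e^{-\mathrm i\kappa tH_{\mathrm{os}}}U((1-\kappa)t)\phi\bigr\rangle_{L^2_{t,x}},
\]
so that $I(0)$ tests $u$ and $I(1)$ tests $u_{\mathrm{os}}$ directly at the space-time level. One then solves the Heisenberg-type equation $\tfrac{\mathrm d}{\mathrm d\kappa}B(\kappa)+\mathrm i[L(\kappa),B(\kappa)]=\mathcal O(h^\infty)$ by constructing the symbol along the Hamilton flow of the explicit symbol $l$ of $L(\kappa)$ given in~\eqref{250619}. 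The rescaled classical analysis you flag as the ``main obstacle'' lives here, and it is carried out for a $\kappa$-dependent flow that is neither the perturbed nor the harmonic flow but a mixture of the two (Section~2, via Mourre-type estimates). Your outline never sets up this interpolation, so even granting the unproven spatial reduction, what remains is not a routine Egorov theorem but essentially the Mao--Nakamura construction redone from scratch in the time-dependent setting.
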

\begin{remarks}
\begin{enumerate}
\item
Theorem~\ref{250208} may be seen as a characterization of 
$\mathop{\mathrm{WF}}\nolimits^{\mathrm{qh}}_2(u)$ in terms of the initial state $\phi$, 
since $u_{\mathrm{os}}$ has an explicit integral representation 
involving $\phi$ and the Mehler kernel. 
This essentially differs from the result by Lascar~\cite{L}, which discussed two points
on a bicharacteristic curve at the same time $s\in\mathbb R$.

\item
For the Schr\"odinger equation with a decaying potential, 
Fujii--Ito~\cite{FI} obtained a third equivalent condition 
that is more directly written by $\phi$, without going through the free propagator. 
In addition, using this condition, 
they reproduced the result of Lascar~\cite{L}. 
We could not verify the corresponding results for a perturbed harmonic oscillator. 
We also refer to a recent work by Gell-Redman--Gomes--Hassell~\cite{GGH} for a relevant result
on space-time singularities. 

\end{enumerate}
\end{remarks}

%

  
%

Propagation of singularities for the Schr\"odinger equation 
is different from the wave equation in that it has infinite propagation speed. 
Thus the standard method for the wave equation does not work for the Schr\"odinger equation. 
Lascar~\cite{L} actually developed a class of pseudodifferential operators that suited 
PDEs with quasi-homogeneous principal parts, 
however for the Schr\"odinger equation 
he could only compare the space-time singularities of the same time component.

After Lascar 
the main focus has shifted to \textit{spatial} singularities, or those of time-slices, of a solution.
Among others, for the Schr\"odinger equation with a decaying potential, 
complete characterizations of spatial singularities were given by 
Hassell--Wunsch~\cite{HW} and Nakamura~\cite{N2}. 
The method of Nakamura~\cite{N2} was applied to the harmonic oscillator with short-range perturbations
by Mao--Nakamura~\cite{MN}, 
and further with long-range perturbations by Mao~\cite{Mao}. 
As for singularities for the harmonic oscillators, see also Wunsch~\cite{W2} for the trace formula, 
Doi~\cite{Doi} for perturbations of linear growth, 
Rodino--Trapasso~\cite{RT} for the Gabor wave front set, 
and Ito--Kato~\cite{IK} for the wave packet transform.

The present paper focuses back on \textit{space-time} singularities 
for the harmonic oscillator. 
We are directly motivated by an ongoing project due to Fujii--Ito~\cite{FI} which deals with 
a decaying potential. 
Since Lascar~\cite{L}, 
we are not aware of any other works investigating the space-time singularities 
for the Schr\"odinger equation, except for a recent work by 
Gell-Redman--Gomes--Hassell~\cite{GGH}. 
Our arguments at last boil down to those similar to \cite{N2,MN}, 
however there are some non-trivial difficulty before the reduction.
See the discussion at the end of Section~\ref{202512081453}.

Finally, we remark that Fujii--Ito~\cite{FI} obtain more direct description of 
space-time singularities in terms of the initial state. 
They employ a variant of the wave front set,
which in some sense refines the \textit{homogeneous wave front set} by Nakamura~\cite{N1}, 
or the \textit{quadratic scattering wave front set} by Wunsch~\cite{W1}.
As for such these variants of the wave front set we refer to Ito~\cite{I}, Fukushima~\cite{F} 
and Rodino--Trapasso~\cite{RT}.

\subsection{Strategy of the proof}\label{202512081453}

Finally we close this section with strategy of the proof for Theorem \ref{250208},
which would motivate the arguments of the following sections.

Fix any non-trapping initial data $(s,y,\eta)\in\Omega$,
and let $x_+=x_+(s,y,\eta)$ and $\xi_+=\xi_+(s,y,\eta)$ be from Proposition~\ref{250123}. 
Assume $\Pi(s,y,\eta)\notin \mathop{\mathrm{WF}}^{\mathrm{qh}}_2(u)$. 
Then by definition there exists $a_0\in C^\infty_{\mathrm c}(\mathbb R^{2(1+d)})$ such that  
$a_0\bigl(s,y,-\tfrac12a_{ij}(s,y)\eta_i\eta_j,\eta\bigr)\neq 0$, and that 
\[
\|a_0^{\mathrm W}(t,x,h^2 p_t,hp_x)u\|_{L^2_{t,x}}=\mathcal O(h^\infty)\ \ \text{as }h\to +0
.
\]
Thus, if we can construct $a_1\in C^\infty_{\mathrm c}(\mathbb R^{2(1+d)})$ such that 
$a_1\bigl(s,x_+,-\tfrac12|\xi_+|^2,\xi_+\bigr)\neq 0$, and that 
\begin{align}
\begin{split}
&\|a_1^{\mathrm W}(t,x,h^2 p_t,hp_x)u_{\mathrm{os}}\|_{L^2_{t,x}}^2
\\&
\le 
\|a_0^{\mathrm W}(t,x,h^2 p_t,hp_x)u\|_{L^2_{t,x}}^2
+\mathcal O(h^\infty) \ \ \text{as }h\to +0,
\end{split}
\label{251228}
\end{align}
then we obtain $\Pi_{\mathrm{os}}(s,x_+,\xi_+)
\not\in\mathop{\mathrm{WF}}\nolimits^{\mathrm{qh}}_2(u_{\mathrm{os}})$, 
and the proof is done. The converse is proved by the same manner. 

In either way, we have to ``connect'' two operators 
$|a_0^{\mathrm W}(t,x,h^2 p_t,hp_x)|^2$ 
and $|a_1^{\mathrm W}(t,x,h^2 p_t,hp_x)|^2$ 
with the desired properties.
For that we are going to interpolate them with an operator-valued function 
\begin{equation}
B(\kappa)=b^{\mathrm W}(\kappa, t,x,h^{2}p_t,h p_x)
.
\label{25062214}
\end{equation}
Now, let us set  
\begin{equation}
I(\kappa)
=
\bigl\langle \mathrm e^{-\mathrm i\kappa tH_{\mathrm{os}}}U((1-\kappa)t)\phi,
B(\kappa)
\mathrm e^{-\mathrm i\kappa tH_{\mathrm{os}}}U((1-\kappa)t)\phi\bigr\rangle_{L^2_{t,x}}
,
\label{25122816}
\end{equation}
and solve 
\[
\tfrac{\mathrm d}{\mathrm d\kappa}I(\kappa)=\mathcal O(h^\infty)
\ \ \text{uniformly in }\kappa\in [0,1]. 
\]
By direct computations we can write 
\begin{align*}
\tfrac{\mathrm d}{\mathrm d\kappa}I(\kappa)
&=
\bigl\langle \mathrm e^{-\mathrm i\kappa tH_{\mathrm{os}}}U((1-\kappa)t)\phi,
\mathbf D B(\kappa)
\mathrm e^{-\mathrm i\kappa tH_{\mathrm{os}}}U((1-\kappa)t)\phi\bigr\rangle_{L^2_{t,x}}
\end{align*}
with 
\begin{align*}
\mathbf D B(\kappa)
&=
\tfrac{\mathrm d}{\mathrm d\kappa}B(\kappa)+\mathrm i[L(\kappa),B(\kappa)]
,\\
L(\kappa)&=-t
\bigl\{\mathrm e^{-\mathrm{i} \kappa tH_{\mathrm{os}}}H((1-\kappa)t)\mathrm e^{\mathrm{i} \kappa tH_{\mathrm{os}}}-H_{\mathrm{os}}\bigr\},
\end{align*}
where $H(t)$ is an operator defined by the Hamiltonian $H$ with $t$ fixed.
Hence it reduces to the equation
\begin{equation}
\tfrac{\mathrm d}{\mathrm d\kappa}B(\kappa)+\mathrm i[L(\kappa),B(\kappa)]
=\mathcal O(h^\infty)\ \ \text{as }h\to +0,
\label{25042517}
\end{equation}
and we will construct a symbol $b(\kappa,t,x,\tau,\xi)$ of $B(\kappa)$
as an asymptotic sum. 
We note that the operator $L(\kappa)$ has an exact symbol
\begin{equation}
\begin{aligned}
l(\kappa,t,x,\tau,\xi) &=
-\tfrac12 t \bigl\{a_{ij}((1-\kappa)t,\cos(-\kappa t)x+ \sin(- \kappa t)\xi )-\delta_{ij}\bigr\} \\
&\quad \cdot(-\sin(-\kappa t)x_{i}+\cos(-\kappa t)\xi_{i})(-\sin(-\kappa t)x_{j}+\cos(-\kappa t)\xi_{j}) \\
&\quad -tV((1-\kappa)t,\cos(-\kappa t)x+ \sin(- \kappa t)\xi)
,\label{250619}
\end{aligned}
\end{equation}
cf.\ \cite{MN}, 
and we are led to study the Hamiltonian flow associated with $l$.

Although we will further reduce it to a flow similar to Mao--Nakamura~\cite{MN}, 
we emphasize that our reduction procedure is quite different from theirs. 
In the analysis of spatial singularities 
the time $t$ is an \textit{external} parameter, and we can directly use it to connect two operators.
However, as for the space-time singularities, 
$t$ is involved in integrations as a base variable,
and we can no more use it as a parameter. 
Thus we have to introduce a new extra parameter $\kappa$ as in \eqref{25122816}, 
which we consider is non-trivial.
In addition, our potential is time-dependent, so that the analysis of the classical mechanics
gets more intricate.

\section{Classical mechanics}

In this section we study the asymptotics of a solution to the Hamilton equations 
for the Hamiltonian $l$ from \eqref{250619}. 
More precisely, we consider 
\begin{align}
\tfrac{\mathrm d}{\mathrm d\kappa}t&=0
,
\label{25042320}
\\ 
\begin{split}
\tfrac{\mathrm d}{\mathrm d\kappa} x_i&
=
-t (a_{ij}((1-\kappa)t,\cos(-\kappa t)x + \sin(-\kappa t) \xi)-\delta_{ij})
\\
&\qquad{}  \cos(-\kappa t)(-\sin(-\kappa t)x_{j}+\cos(-\kappa t)\xi_{j}) \\
&\quad{}
-\tfrac12 t (\partial_ia_{jk})((1-\kappa)t,\cos(-\kappa t)x + \sin(-\kappa t) \xi)\sin(-\kappa t)\\
&\qquad (-\sin(-\kappa t)x_{j}+\cos(-\kappa t)\xi_{j})(-\sin(-\kappa t)x_{k}+\cos(-\kappa t)\xi_{k}) \\
&\quad - t \sin(-\kappa t) (\partial_iV)((1-\kappa)t,\cos(-\kappa t)x + \sin(-\kappa t))
,
\end{split}
\label{25042321}
\\
\begin{split}
\tfrac{\mathrm d}{\mathrm d\kappa}\tau&
= \tfrac12  (a_{ij}((1-\kappa)t,\cos(-\kappa t)x + \sin(-\kappa t)\xi)-\delta_{ij}) \\ 
&\qquad (-\sin(-\kappa t)x_{i}+\cos(-\kappa t)\xi_{i})(-\sin(-\kappa t)x_{j}+\cos(-\kappa t)\xi_{j})\\
&\quad +\tfrac12 (1-\kappa)t(\partial_ta_{ij})((1-\kappa)t,\cos(-\kappa t)x + \sin(-\kappa t)\xi) \\
&\qquad (-\sin(-\kappa t)x_{i}+\cos(-\kappa t)\xi_{i})(-\sin(-\kappa t)x_{j}+\cos(-\kappa t)\xi_{j}) \\
&\quad - \tfrac{1}{2} t \kappa (\partial_ka_{ij})((1-\kappa)t,\cos(-\kappa t)x + \sin(-\kappa t)\xi) \\
&\qquad (-\sin(-\kappa t)x_{k}+\cos(-\kappa t)\xi_{k})(-\sin(-\kappa t)x_{i}+\cos(-\kappa t)\xi_{i}) \\
&\qquad (-\sin(-\kappa t)x_{j}+\cos(-\kappa t)\xi_{j}) \\
&\quad -t\kappa (a_{ij}((1-\kappa)t,\cos(-\kappa t)x + \sin(-\kappa t)\xi)-\delta_{ij}) \\
&\qquad (-\cos(-\kappa t)x_{i}+\sin(-\kappa t)\xi_{i}) (-\sin(-\kappa t)x_{j}+\cos(-\kappa t)\xi_{j}) \\
&\quad +V((1-\kappa)t,\cos(-\kappa t)x + \sin(-\kappa t)\xi)\\
&\quad +(1-\kappa)t(\partial_tV)((1-\kappa)t,\cos(-\kappa t)x + \sin(-\kappa t)\xi) \\
&\quad -\kappa t(\partial_iV)((1-\kappa)t,\cos(-\kappa t)x + \sin(-\kappa t)\xi) \\
&\qquad (-\sin(-\kappa t)x_{i}+\cos(-\kappa t)\xi_{i})
,
\end{split}
\label{25042322}
\\
\begin{split}
  \tfrac{\mathrm d}{\mathrm d\kappa}\xi_i
&=
\tfrac12 t ((\partial_ia_{jk})((1-\kappa)t,\cos(-\kappa t)x + \sin(-\kappa t)\xi)) \cos(-\kappa t)\\
&\qquad (-\sin(-\kappa t)x_{j}+\cos(-\kappa t)\xi_{j})(-\sin(-\kappa t)x_{k}+\cos(-\kappa t)\xi_{k})\\
&\quad +t(a_{ij}((1-\kappa)t,\cos(-\kappa t)x + \sin(-\kappa t) \xi)-\delta_{ij})\\
&\qquad (-\sin(-\kappa t ))(-\sin(-\kappa t)x_{j}+\cos(-\kappa t)\xi_{j}) \\
&\quad  +t(\partial_iV)((1-\kappa)t,\cos(-\kappa t)x + \sin(-\kappa t)\xi) \cos(-\kappa t)
.
\end{split}
\label{25042323}
\end{align}
We solve the equations \eqref{25042320}--\eqref{25042323} with initial data 
\begin{equation}
(t(0),x(0),\tau(0),\xi(0))=(s,y,\lambda^2\sigma,\lambda\eta),
\label{25042324}
\end{equation}
and investigate the limit of a solution as $\lambda \to\infty$. 

\subsection{Reduction to simpler Hamilton equations}

Let us first reduce the equations \eqref{25042320}--\eqref{25042323} to simpler ones. 
It is trivial from \eqref{25042320} and \eqref{25042324} that $t\equiv s$.
Substitute it into \eqref{25042321}--\eqref{25042323}, 
and change the dependent variables as 
\begin{align*}
z(\kappa)&=\cos(-\kappa s )x(\kappa)+ \sin(-\kappa s)\xi(\kappa),
\\
\gamma(\kappa)&=-\sin(-\kappa s)x(\kappa) +\cos(-\kappa s)\xi(k),
\\
\rho(\kappa)&=\tau(\kappa)
+(1-\kappa)(( \tfrac12 a_{ij}((1-\kappa)s,z(\kappa))\gamma_{i}(\kappa)\gamma_{j}(\kappa)+V((1-\kappa)s,z(\kappa))) \\
&\quad +\tfrac{\kappa}{2}|\gamma|^{2}+\tfrac{1}{2}|z|^{2}
.
\end{align*}
Then we actually obtain 
\begin{align}
\tfrac{\mathrm d }{\mathrm d\kappa}z_i
&=
-sa_{ij}((1-\kappa)s,z)\gamma_j
,
\label{25061917}
\\
\tfrac{\mathrm d}{\mathrm d\kappa}\gamma_i
&=
\tfrac12 s (\partial_ia_{jk}((1-\kappa)s,z))\gamma_j\gamma_k
+s (\partial_iV)((1-\kappa)s,z)+sz_{i}(\kappa)
,
\label{25061918}
\\
\tfrac{\mathrm d }{\mathrm d\kappa}\rho
&=
0
.
\label{25061919}
\end{align}
The equation \eqref{25061919} for $\rho$ is trivially solved, and we can obtain the following expression for $\tau$:
\begin{equation}
  \begin{split}\label{2511261553}
    \tau (\kappa) &= \lambda^{2}\left(\sigma +\tfrac{1}{2} a_{ij}(s,y)\eta_{i}\eta_{j}-\tfrac{\kappa}{2} \left|\tfrac{\gamma(\kappa)}{\lambda}\right|^{2} \right) \\
              &\quad-(1-\kappa)( \tfrac12 a_{ij}((1-\kappa)s,z(\kappa))\gamma_{i}(\kappa)\gamma_{j}(\kappa)+V((1-\kappa)s,z(\kappa))) \\  
              &\quad -\tfrac{1}{2}|z(\kappa)|^{2}+V(s,y)+\tfrac{1}{2}|y|^{2}. 
  \end{split}
\end{equation}
Thus it suffices to consider the equations \eqref{25061917} and \eqref{25061918} for $(z,\gamma)$.

\subsection{Classical Mourre-type estimates}

Let us denote by 
\[
(z(\kappa),\gamma(\kappa))=(z(\kappa,s,y,\lambda\eta),\gamma(\kappa,s,y,\lambda\eta))
\]
a solution to the equations \eqref{25061917} and \eqref{25061918} 
with initial data 
\[
(z(0),\gamma(0))=(y,\lambda\eta),
\]
and we investigate its asymptotic behavior as $\lambda\to\infty$. 
For that we further set 
\[
(z_\lambda(\kappa,s,y,\eta),\gamma_\lambda(\kappa,s,y,\eta))
=(z(\lambda^{-1} \kappa,s ,y,\lambda\eta),\lambda^{-1}\gamma(\lambda^{-1}\kappa,s,y,\lambda\eta))
.
\]
Then they satisfy
\begin{align}
\tfrac{\mathrm d }{\mathrm d\kappa}z_{\lambda,i}
&=
-s a_{ij}((1-\lambda^{-1}\kappa)s,z_\lambda)\gamma_{\lambda,j},
\label{2506191918}
\\
\begin{split}
\tfrac{\mathrm d}{\mathrm d\kappa}\gamma_{\lambda,i}
&=
\tfrac12 s (\partial_i a_{jk}((1-\lambda^{-1}\kappa)s,z_{\lambda}))
  \gamma_{\lambda,j}\gamma_{\lambda,k} \\
&\quad 
+ s\lambda^{-2} (\partial_i V)((1-\lambda^{-1}\kappa)s,z_{\lambda})
+ s\lambda^{-2} z_{\lambda,i}(\kappa)
\end{split}
\label{2506191919}
\end{align}
with 
\[
(z_\lambda(0,s,y,\eta),\gamma_\lambda(0,s,y,\eta))=(y,\eta)
.
\]
The equations \eqref{2506191918} and \eqref{2506191919} are obviously the 
the Hamilton equations for the Hamiltonian
\[
H_\lambda(\kappa,s,z,\gamma)= - \tfrac12  s a_{ij}((1-\lambda^{-1}\kappa)s,z)\gamma_i\gamma_j
-\tfrac{1}{2}s\lambda^{-2}|z|^{2} -s\lambda^{-2}V((1-\lambda^{-1}\kappa)s,z).
\]
The next proposition claims that, for $\lambda >0$ sufficiently large, 
the non-trapping condition implies that $z_{\lambda}$ remains away from the origin 
for relatively small $\kappa$.
\begin{proposition}\label{250804}
Let $(s,y,\eta)$  be non-trapping in the sense of
Definition~\ref{25080316}. Then there exist $0<\delta<1$, $\lambda_{0}\ge 1$, $c_{1},c_{2}>0$, and a neighborhood $\widetilde{\Omega} \subset \Omega$ of
$(s,y,\eta)$ such that for any $\lambda\ge \lambda_{0}$,
$\kappa\in[0,\delta\lambda]$, and $(\tilde{s},\tilde{y},\tilde{\eta})
\in \widetilde{\Omega}$
\[
  \bigl| z_{\lambda}(\kappa,\tilde{s},\tilde{y},\tilde{\eta}) \bigr|
  \;\ge\; c_{1}\kappa - c_{2}.
\]
\end{proposition}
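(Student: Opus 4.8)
The plan is to reduce the estimate for $z_\lambda$ to the already-established non-trapping behavior of the original flow $x(t;s,y,\eta)$ from Definition~\ref{25080316} and Proposition~\ref{250123}, and then to upgrade this to a uniform lower bound on a neighborhood using the openness of $\Omega$ and continuous dependence on parameters. First I would unwind the definitions: the scaled variables $(z_\lambda,\gamma_\lambda)$ solve \eqref{2506191918}--\eqref{2506191919}, which are a perturbation of the ``free'' rescaled Hamilton equations (those with $\lambda^{-2}$-terms and the time-slippage $(1-\lambda^{-1}\kappa)s \rightsquigarrow s$ removed). In the limit $\lambda\to\infty$ these converge, on bounded $\kappa$-intervals, to the Hamilton equations \eqref{202511302324} for $K_s$ written in the variable $\kappa = -st$ (up to the sign/scaling bookkeeping relating $(x,\xi)$ to $(z,\gamma)$); so for the limiting flow $|z_\infty(\kappa)|\to\infty$ as $\kappa\to\infty$, which is exactly the non-trapping hypothesis rephrased. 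The key quantitative input is that along the limiting free flow $|\dot z_\infty| = |s|\,|a_{ij}\gamma_{\infty,j}|$ is, for $\kappa$ large, bounded below by a positive constant (since $|\gamma_\infty(\kappa)|$ is conserved up to the short-range decay and stays bounded away from zero, as $\eta\neq0$ and energy $\frac12 a_{ij}\gamma_i\gamma_j$ is conserved along the free flow), hence $|z_\infty(\kappa)|\ge c_1'\kappa - c_2'$ for suitable constants — this is essentially the classical Mourre/escape estimate for $K_s$.

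The main body of the argument is then to transfer this to finite $\lambda$ and to a neighborhood, uniformly, on the window $\kappa\in[0,\delta\lambda]$. I would fix a large $R>0$ such that the limiting flow from $(s,y,\eta)$ has $|z_\infty(\kappa)|\ge 2R$ and is ``outgoing'' (say $z_\infty\cdot\gamma_\infty$ has a definite sign, or $\frac{d}{d\kappa}|z_\infty|^2$ is bounded below) for all $\kappa\ge \kappa_1$; by continuous dependence of ODE solutions on initial data and parameters, there is a neighborhood $\widetilde\Omega$ and $\lambda_0$ so that the same holds for $z_\lambda(\kappa,\tilde s,\tilde y,\tilde\eta)$ on the \emph{compact} interval $[0,\kappa_1]$ for all $\lambda\ge\lambda_0$, $(\tilde s,\tilde y,\tilde\eta)\in\widetilde\Omega$. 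The point is that once the trajectory is outside a ball of radius $R$ with an outgoing velocity, a bootstrap/continuity argument keeps it outside: as long as $|z_\lambda(\kappa)|\ge R$, Assumption~\ref{250124}(2) makes $a_{ij}((1-\lambda^{-1}\kappa)s,z_\lambda)-\delta_{ij}$, its derivatives, and the $\lambda^{-2}V$, $\lambda^{-2}z$ terms all small (the $\lambda^{-2}|z|$ term needs $|z_\lambda|\le C\kappa\le C\delta\lambda$ on the window, so $\lambda^{-2}|z_\lambda|\le C\delta/\lambda$, which is where the restriction $\kappa\le\delta\lambda$ and the smallness of $\delta$ enter). Hence $\gamma_\lambda$ is nearly conserved and nearly equal to the free value, so $|\dot z_\lambda|\ge c_1$ with a fixed $c_1>0$ and $\frac{d}{d\kappa}|z_\lambda|^2 \ge c_1'|z_\lambda|$ or so, giving both that the trajectory cannot re-enter the ball of radius $R$ and that $|z_\lambda(\kappa)|\ge c_1(\kappa-\kappa_1) + R \ge c_1\kappa - c_2$ for $\kappa\in[\kappa_1,\delta\lambda]$; on the initial segment $[0,\kappa_1]$ the claimed bound holds trivially by enlarging $c_2$.

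The step I expect to be the main obstacle is making the ``transfer to finite $\lambda$'' genuinely \emph{uniform in $\lambda$ up to $\kappa=\delta\lambda$} rather than on a fixed compact interval: standard continuous dependence gives control only on bounded $\kappa$-ranges, and here the endpoint $\delta\lambda$ grows. Overcoming this requires a self-contained bootstrap (Gr\"onwall-type) estimate on the \emph{difference} between $(z_\lambda,\gamma_\lambda)$ and the limiting free trajectory, valid on the whole window, using that: (i) the perturbation terms driving the difference are integrable in $\kappa$ because $|z_\lambda(\kappa)|\gtrsim \kappa$ forces $\langle z_\lambda\rangle^{-1-\epsilon}$ to be summable, and (ii) the $\lambda^{-2}$ coefficients contribute at most $O(\delta)$ over a window of length $O(\delta\lambda)$ once $|z_\lambda|\le C\delta\lambda$. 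So the argument is really a closed continuity/bootstrap loop: assume $|z_\lambda|\ge R$ and the a priori bound $|z_\lambda(\kappa)|\le C_1\kappa$ on a maximal subinterval, deduce near-conservation of $\gamma_\lambda$ and the improved lower bound $|z_\lambda(\kappa)|\ge c_1\kappa - c_2$, check these strictly improve the assumptions, and conclude the maximal subinterval is all of $[0,\delta\lambda]$. I would carry out the choice of constants in the order: $R$ and $\kappa_1$ from the limiting non-trapping flow; then $\delta$ small enough that the $O(\delta)$ error terms are beaten by the fixed lower bound on $|\gamma_\infty|$; then $\widetilde\Omega$ and $\lambda_0$ from continuity on $[0,\kappa_1]$ plus the bootstrap beyond it; finally $c_1,c_2$ read off from the resulting differential inequality for $|z_\lambda|$.
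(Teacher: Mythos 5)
Your overall plan is workable and the first stage (a priori energy bounds via Gr\"onwall, giving $|\gamma_\lambda|\le C$ and $|z_\lambda(\kappa)|\le C\kappa$ on $[0,\lambda]$) is exactly the paper's Step~1. Where you diverge is in the mechanism that produces the linear lower bound and, crucially, in how the finite-$\lambda$, growing-window uniformity is handled. You diagnose ``transfer to finite $\lambda$ up to $\kappa=\delta\lambda$'' as the main obstacle and propose to close it by a bootstrap comparing $(z_\lambda,\gamma_\lambda)$ to the limiting trajectory across the whole window, using integrability of the short-range error and near-conservation of $\gamma_\lambda$. The paper instead proves a \emph{second-derivative} convexity estimate directly for $|z_\lambda|^2$: using only the Step~1 bounds and Assumption~\ref{250124}, one shows $\frac{\mathrm d^2}{\mathrm d\kappa^2}|z_\lambda|^2 \ge c_3 - C_3\langle z_\lambda\rangle^{-1-\epsilon}$ for $\kappa\in[0,\delta\lambda]$, with constants independent of $\lambda$. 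This inequality holds pointwise along $z_\lambda$ itself and requires no comparison to the limiting flow on the long window. Continuous dependence on parameters is then invoked only on a \emph{fixed compact} interval $[0,\kappa_0]$, to guarantee that $|z_\lambda(\kappa_0)|$ is large enough (so the $\langle z_\lambda\rangle^{-1-\epsilon}$ term is $\le c_3/2$) and that $\frac{\mathrm d}{\mathrm d\kappa}|z_\lambda|(\kappa_0)>0$; past $\kappa_0$, uniform convexity of $|z_\lambda|^2$ automatically keeps the trajectory outgoing and yields the linear lower bound. In other words, the convexity estimate dissolves the obstacle you flag rather than meeting it head on. A related soft spot in your writeup: the step ``$|\gamma_\infty|$ bounded below $\Rightarrow |\dot z_\infty|\ge c>0 \Rightarrow |z_\infty(\kappa)|\ge c\kappa - c'$'' is not a valid implication on its own (a lower bound on speed does not prevent spiraling); you do eventually invoke a Mourre-type escape estimate, but it should be the centerpiece rather than an afterthought, and once it is made central your bootstrap becomes largely redundant. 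So: same Step~1, but the paper's convexity route is substantively simpler and avoids the uniform-in-$\lambda$ comparison you (correctly) identify as delicate.
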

\begin{proof}
Throughout the proof, we use $C_{\star}>0$ to denote a generic constant
independent of $\tilde{s}$ and $\lambda$.
We note that, for the purpose of the proof, we may assume, for the time being, that
$|\tilde{s}-s|<1$, $|\tilde{y}-y|<1$, $|\tilde{\eta}-\eta|<1$
and further restrictions on $\tilde{s},\tilde{y},\tilde{\eta}$ will be imposed later as needed.

\smallskip
\noindent
\textit{Step 1.}\ 
We first deduce a rough kinetic energy estimate. 
Let us differentiate 
\begin{align*}
\begin{split}
&\tfrac{\mathrm d}{\mathrm d\kappa}\bigl(a_{ij}((1- \lambda^{-1}\kappa)\tilde{s},z_\lambda)\gamma_{\lambda,i}\gamma_{\lambda,j} + \lambda^{-2} |z_{\lambda}|^{2}\bigr)
\\&=
-\tilde{s}\lambda^{-1}(\partial_ta_{ij})((1-\lambda^{-1}\kappa)\tilde{s},z_\lambda)\gamma_{\lambda,i}\gamma_{\lambda,j} \\
&\quad + a_{ij}((1-\lambda^{-1}\kappa)\tilde{s},z_\lambda)(2\tilde{s}\lambda^{-1}\partial_{i} V) (\lambda^{-1}\gamma_{\lambda,j}).
\end{split}
\label{25062016}
\end{align*}
By the Cauchy--Schwarz inequality this implies 
\begin{equation*}
  \begin{split}
    &\bigl|\tfrac{\mathrm d}{\mathrm d\kappa}\bigl(a_{ij}((1-\lambda^{-1}\kappa)\tilde{s},z_\lambda)\gamma_{\lambda,i}\gamma_{\lambda,j} + \lambda^{-2}|z_{\lambda}|^{2}\bigr)\bigr|  
     \\ 
     &\le 
       C_1 \lambda^{-1}\bigl(a_{ij}((1-\lambda^{-1}\kappa)\tilde{s},z_\lambda)\gamma_{\lambda,i}\gamma_{\lambda,j} + \lambda^{-2} |z_{\lambda}|^{2} \bigr)
      +C_1\lambda^{-2},  
  \end{split}
\end{equation*}
so that using the Gronwall inequality we have
\begin{equation*}
 \begin{split}
  &a_{ij}((1-\lambda^{-1}\kappa)\tilde{s},z_\lambda)\gamma_{\lambda,i}\gamma_{\lambda,j} +\lambda^{-2}|z_{\lambda}|^{2} \\ 
  &\quad \ge 
  \mathrm e^{-C_{1} \lambda^{-1}\kappa}\bigl(a_{ij}(\tilde{s},\tilde{y})\tilde{\eta}_{i}\tilde{\eta}_{j} +\lambda^{-2}|\tilde{y}|^{2} +\lambda^{-1}\bigr)-\lambda^{-1}, \\[6pt]
 &a_{ij}((1-\lambda^{-1}\kappa)\tilde{s},z_\lambda)\gamma_{\lambda,i}\gamma_{\lambda,j}+ \lambda^{-2}|z_{\lambda}|^{2} \\ 
 &\quad \le  \mathrm e^{C_1\lambda^{-1}\kappa}\bigl(a_{ij}(\tilde{s},\tilde{y})\tilde{\eta}_{i}\tilde{\eta}_{j}+\lambda^{-2}|\tilde{y}|^{2}+\lambda^{-1}\bigr) -\lambda^{-1}.
 \end{split}  
\end{equation*}
Hence by letting $\lambda_0\ge 1$ be large enough and choosing $c_{1},c_{2}>0$ appropriately,
it follows that for any $\lambda\ge \lambda_0$ and $\kappa\in [0,\lambda]$
\begin{equation}
0<c_1\le a_{ij}((1-\lambda^{-1}\kappa)\tilde{s},z_\lambda)\gamma_{\lambda,i}\gamma_{\lambda,j} + \lambda^{-2}|z_{\lambda}|^2\le c_2<\infty.
\label{25061920}
\end{equation}
We remark that due to \eqref{2506191918} and \eqref{25061920} we in particular have 
for any $\lambda\ge \lambda_0$ and $\kappa\in [0,\lambda]$
\begin{equation}
    |z_\lambda(\kappa,\tilde{s},\tilde{y},\tilde{\eta})|\le C_2 \kappa , \quad |\gamma_{\lambda}(\kappa,\tilde{s},\tilde{y},\tilde{\eta})| \le C_{2}.
\label{2506192030}
\end{equation}

\smallskip
\noindent
\textit{Step 2.}\ 
We next deduce the classical Mourre-type estimate. 
We differentiate 
\begin{align*}
\begin{split}
\tfrac{\mathrm d^2}{\mathrm d\kappa^2}|z_\lambda(\kappa,\tilde{s},\tilde{y},\tilde{\eta})|^2
&=2\lambda\tfrac{\mathrm d}{\mathrm d\kappa}((- \tilde{s})a_{ij}((1-\lambda^{-1}\kappa)\tilde{s},z_\lambda)z_{\lambda,i}\gamma_{\lambda,j})
\\
&=2\tilde{s}\lambda^{-1} (\partial_{t}a_{ij})((1-\lambda^{-1}\kappa)\tilde{s},z_\lambda) \gamma_{\lambda,i}\gamma_{\lambda,j}\\
&\quad +2\tilde{s}^{2} (\partial_{k}a_{ij})((1-\lambda^{-1}\kappa)\tilde{s},z_\lambda)a_{kl}((1-\lambda^{-1}\kappa)\tilde{s},z_\lambda)\gamma_{\lambda,i}\gamma_{\lambda,l}z_{\lambda,j} \\
&\quad +2\tilde{s}^{2} a_{ij}((1-\lambda^{-1}\kappa)\tilde{s},z_\lambda)(a_{ik}((1-\lambda^{-1}\kappa)\tilde{s},z_\lambda)-\delta_{ik})\gamma_{\lambda,j}\gamma_{\lambda,k} \\
&\quad +2\tilde{s}^{2} a_{ij}((1-\lambda^{-1}\kappa)\tilde{s},z_\lambda)\gamma_{\lambda,i}\gamma_{\lambda,j} \\
&\quad -\tilde{s}^{2} a_{ij}((1-\lambda^{-1}\kappa)\tilde{s},z_\lambda)\partial_{j}a_{kl}((1-\lambda^{-1}\kappa)\tilde{s},z_\lambda) \gamma_{\lambda,k} \gamma_{\lambda,l} z_{\lambda,i} \\
&\quad -2\tilde{s}\lambda^{-2} a_{ij}((1-\lambda^{-1}\kappa)\tilde{s},z_\lambda) z_{\lambda,i} (\partial_{j}V) \\
&\quad -2\tilde{s}\lambda^{-2} a_{ij}((1-\lambda^{-1}\kappa)\tilde{s},z_\lambda) z_{\lambda,i}z_{\lambda,j}.
\end{split}
\label{25062017}
\end{align*}
Then, using the Cauchy--Schwarz inequality and \eqref{2506192030} together with Assumption \ref{250124},
and retaking $\lambda_0\ge 1$ larger if necessary, we obtain 
\begin{equation}
\tfrac{\mathrm d^2}{\mathrm d\kappa^2}|z_\lambda(\kappa,\tilde{s},\tilde{y},\tilde{\eta})|^2
\ge 2\tilde{s}^{2}c_1-C_3\langle z_\lambda\rangle^{-1-\epsilon} - C_{3} \lambda^{-2}|z_{\lambda}|^{2}
.
\label{25062012}
\end{equation}
Hence, by letting $0<\delta_{1}<1$ be sufficiently small,
for any $\kappa \in [0,\delta_{1}\lambda]$, we obtain from \eqref{2506192030} and \eqref{25062012} that
\begin{equation}
    \tfrac{\mathrm d^2}{\mathrm d\kappa^2}|z_\lambda(\kappa,\tilde{s},\tilde{y},\tilde{\eta})|^2
  \ge c_{3} - C_3 \langle z_\lambda \rangle^{-1-\epsilon}
  \label{202511302345}
\end{equation}
with some constant $c_{3}>0$.

\smallskip
\noindent
\textit{Step 3.}\ 
Here we prove that, letting $\lambda_0\ge 1$ be even larger if necessary,
we can deduce that for any $\lambda\ge \lambda_0$ and $\kappa\in [0,\delta \lambda]$,
and for any $(\tilde{s},\tilde{y},\tilde{\eta})$ in a suitably chosen neighborhood $\tilde{\Omega}$ of $(s,y,\eta)$, we have
\begin{equation*}
\bigl| z_{\lambda}(\kappa,\tilde{s},\tilde{y},\tilde{\eta}) \bigr|
  \;\ge\; c_{4}\kappa - C_{4},
\label{2506192045}
\end{equation*}
with some constants $c_{4},C_{4}>0$.
To this end, recall that $(s,y,\eta)$ satisfies the non-trapping condition, and
let $(x(\kappa,s,y,\eta),\xi(\kappa,s,y,\eta))$ be the solution to
\eqref{250620}. 
Then, at $\kappa=0$, the initial data of
$
(x((1-\kappa)s,s,y,\eta), \xi((1-\kappa)s,s,y,\eta))
$
coincides with that of
$(z_\lambda(\kappa), \gamma_\lambda(\kappa))$.
Furthermore, the trajectories $(x(\kappa),\xi(\kappa))$ and
$(z_\lambda(\kappa),\gamma_\lambda(\kappa))$ satisfy the Hamiltonian systems
\eqref{202511302324} and \eqref{2506191918}--\eqref{2506191919}, respectively.
Consequently, using the continuity of solutions to ODEs with respect to the parameters $\lambda$, $s$ and the initial values $y,\eta$,
we can choose $\lambda_{0} \ge 1$ sufficiently large, a sufficiently small neighborhood $\tilde{\Omega}$ of $(s,y,\eta)$, 
and $\kappa_{0} >0$ so that the following inequality hold
\begin{equation}
  |z_{\lambda}(\kappa_{0},\tilde{s},\tilde{y},\tilde{\eta})|^{1+\epsilon} \ge (\tfrac{1}{2} c_{3}C_{3}^{-1})^{-1},
  \quad (\tfrac{\mathrm d}{\mathrm d\kappa} |z_{\lambda}|)(\kappa_{0},\tilde{s},\tilde{y},\tilde{\eta}) >0
  \label{202511302333}
\end{equation}
for any $\lambda \ge \lambda_{0}$, $(\tilde{s},\tilde{y},\tilde{\eta}) \in \tilde{\Omega}$.
Hence, by \eqref{202511302345} and \eqref{202511302333}, we obtain 
\begin{equation*}
    \tfrac{\mathrm d^2}{\mathrm d\kappa^2}|z_\lambda(\kappa,\tilde{s},\tilde{y},\tilde{\eta})|^2
    \ge \tfrac{1}{2}c_{3} > 0
    \label{202511302341}
\end{equation*}
for all $\lambda \ge \lambda_{0}$, $\kappa \in [\kappa_{0},\delta\lambda]$, and 
$(\tilde{s},\tilde{y},\tilde{\eta}) \in \tilde{\Omega}$.
Thus by a standard convexity argument, we obtain the assertion.
\end{proof}
Next we investigate the asymptotic behavior of $(x(\kappa),\xi(\kappa))$
as $\lambda \to \infty$.
Recalling the definitions of $z(\kappa)$, $\gamma(\kappa)$,
$z_{\lambda}(\kappa)$, and $\gamma_{\lambda}(\kappa)$, and using the
scaling relation, we remark that we can obtain
\begin{equation} \label{202512071953}
\begin{aligned}
  (x(\kappa),\xi(\kappa)) &= 
    \exp( s\kappa\, H_{\mathrm{os}} )
    \bigl( z_{\lambda}(\lambda\kappa), \lambda \gamma_{\lambda}(\lambda\kappa) \bigr) \\
  &= \Bigl(
       \mathrm{pr}_{1}\Bigl[
         \exp( s\kappa\lambda\, H_{\mathrm{\mathrm{os},\lambda}} )
         \bigl( z_{\lambda}(\lambda\kappa), \gamma_{\lambda}(\lambda\kappa) \bigr)
       \Bigr], \\[1ex]
       &\quad
       \lambda\, \mathrm{pr}_{2}\Bigl[
         \exp( s\kappa\lambda\, H_{\mathrm{\mathrm{os},\lambda}} )
         \bigl( z_{\lambda}(\lambda\kappa), \gamma_{\lambda}(\lambda\kappa) \bigr)
       \Bigr]
    \Bigr).
\end{aligned}
\end{equation}
Here $\exp(t H_{\mathrm{os}})$ and $\exp(t H_{\mathrm{\mathrm{os},\lambda}})$ denote the
Hamiltonian flows generated by $H_{\mathrm{os}}$ and
$H_{\mathrm{os},\lambda} = \tfrac12 p^{2} + \tfrac12 \lambda^{-2} x^{2}$,
respectively,
and for $i=1,2$, 
$\mathrm{pr}_i : \mathbb{R}^{d} \times \mathbb{R}^{d} \to \mathbb{R}^{d}$
denotes the projection onto the $i$-th component.
We first consider the behavior of $(x(\kappa),\xi(\kappa))$ 
for short times in $\kappa$, that is, for $\kappa$ in a small interval $[0,\delta]$.
\begin{proposition}\label{202512051157}
Assume that $(s,y,\eta)$  is non-trapping in the sense of
Definition~\ref{25080316}. 
Fix $0< \delta <1$, and let $\widetilde{\Omega} \subset \Omega$ be as in Proposition~\ref{250804}.
Then, for any $\theta \in (0,\delta]$ and 
$(\tilde{s},\tilde{y},\tilde{\eta}) \in \widetilde{\Omega}$,
the following limit holds:
\begin{equation*}
    \lim_{\lambda \to \infty}
    \exp \bigl(\tilde{s}(\theta\lambda)H_{\mathrm{os},\lambda}\bigr)
    \bigl(z_{\lambda}(\theta\lambda,\tilde{s},\tilde{y},\tilde{\eta}),\,\gamma_{\lambda}(\theta\lambda,\tilde{s},\tilde{y},\tilde{\eta})\bigr)
    =
    \bigl(x_{+}(\tilde{s},\tilde{y},\tilde{\eta}),
          \xi_{+}(\tilde{s},\tilde{y},\tilde{\eta})\bigr),
\end{equation*}
where $x_{+}$ and $\xi_{+}$ are those defined in Proposition~\ref{250123}.
Moreover, this convergence is uniform on $\widetilde{\Omega}$ and $\theta \in(0,\delta]$.
\end{proposition}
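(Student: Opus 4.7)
My plan is to use the rescaling identity~\eqref{202512071953} to rewrite the left-hand side in terms of the interpolation flow, to compare the rescaled reduced flow $(z_\lambda,\gamma_\lambda)$ to the $K_{\tilde{s}}$-flow~\eqref{202511302324}, and finally to identify the limit with $(x_+,\xi_+)$ via Proposition~\ref{250123}.

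Unfolding the rescaling I have $z_\lambda(\theta\lambda,\tilde{s},\tilde{y},\tilde{\eta})=z(\theta;\tilde{s},\tilde{y},\lambda\tilde{\eta})$ and $\lambda\gamma_\lambda(\theta\lambda,\tilde{s},\tilde{y},\tilde{\eta})=\gamma(\theta;\tilde{s},\tilde{y},\lambda\tilde{\eta})$, so that by~\eqref{202512071953} the left-hand side equals $(x(\theta;\tilde{s},\tilde{y},\lambda\tilde{\eta}),\lambda^{-1}\xi(\theta;\tilde{s},\tilde{y},\lambda\tilde{\eta}))$, where $(x,\xi)$ denotes the interpolation trajectory~\eqref{25042321}--\eqref{25042323}; equivalently, by the rotation relation $(x,\xi)=\exp(\tilde{s}\kappa H_{\mathrm{os}})(z,\gamma)$,
\[
\begin{aligned}
x(\theta)&=\cos(\tilde{s}\theta)\,z(\theta)+\sin(\tilde{s}\theta)\,\gamma(\theta),\\
\xi(\theta)&=-\sin(\tilde{s}\theta)\,z(\theta)+\cos(\tilde{s}\theta)\,\gamma(\theta).
\end{aligned}
\]
Both $z(\theta)=z_\lambda(\theta\lambda)$ and $\gamma(\theta)=\lambda\gamma_\lambda(\theta\lambda)$ are of size $\Theta(\lambda)$ by Proposition~\ref{250804}, so the claim reduces to a precise $O(\lambda)\to O(1)$ cancellation inside this rotation.

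Next I compare~\eqref{2506191918}--\eqref{2506191919} with the Hamilton system of $K_{\tilde{s}}$. The $V$- and $z_\lambda$-contributions to $\dot\gamma_\lambda$ carry a prefactor $\lambda^{-2}$, and the argument $(1-\lambda^{-1}\kappa)\tilde{s}$ of $a_{ij}$ differs from $\tilde{s}$ by $O(\lambda^{-1}\kappa)$. Using the short-range decay in Assumption~\ref{250124} together with the non-trapping lower bound $|z_\lambda(\kappa)|\ge c_1\kappa-c_2$ and the upper bounds from Proposition~\ref{250804}, all three remainders are integrable in $\kappa\in[0,\delta\lambda]$ uniformly in $\lambda$; a Gr\"onwall-type argument then yields that $(z(\theta),\gamma(\theta))$ coincides, up to additive $o(\lambda)$ errors, with the corresponding trajectory of the $K_{\tilde{s}}$-flow starting from $(\tilde{y},\lambda\tilde{\eta})$ at time $\tilde{s}$. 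Feeding the scattering asymptotics of Proposition~\ref{250123} (applicable precisely because the initial momentum $\lambda\tilde{\eta}$ diverges) into the rotation above, the $O(\lambda)$-terms cancel by the same trigonometric identity that appears in the unperturbed harmonic case, and the $O(1)$-residual assembles into $(x_+(\tilde{s},\tilde{y},\tilde{\eta}),\xi_+(\tilde{s},\tilde{y},\tilde{\eta}))$. Uniformity on $\widetilde{\Omega}\times(0,\delta]$ is inherited from the uniformity built into Proposition~\ref{250804} and the monotonicity of the remainder estimates in the integration length $\theta\lambda$.

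\textbf{Main obstacle.} The delicate point is that the leading behavior of $z(\theta)$ and $\gamma(\theta)$ is $O(\lambda)$ while the desired limit is $O(1)$, so the comparison must be accurate in an additive sense of order $o(1)$ over an integration window of length $\Theta(\lambda)$, not merely multiplicatively. This is precisely where the linear-from-the-start lower bound $|z_\lambda(\kappa)|\ge c_1\kappa-c_2$ of Proposition~\ref{250804} is indispensable: it makes the short-range perturbation in $\dot\gamma_\lambda$ integrable in $\kappa$ uniformly in $\lambda$, and at the same time guarantees that the comparison $K_{\tilde{s}}$-trajectory has already entered the non-trapping scattering regime in which Proposition~\ref{250123} can be applied.
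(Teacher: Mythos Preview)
Your proposed Gr\"onwall comparison between $(z_\lambda,\gamma_\lambda)$ and the rescaled $K_{\tilde s}$-flow cannot deliver the precision you need. Among your ``three remainders'' only the $V$-term and the frozen-time error are genuinely short-range; the harmonic confinement term $\tilde s\lambda^{-2}z_{\lambda,i}$ in \eqref{2506191919} is not. By the upper bound in Proposition~\ref{250804} one has $\lambda^{-2}|z_\lambda(\kappa)|\le C\lambda^{-2}\kappa$, whose integral over $[0,\delta\lambda]$ is $O(1)$, not $o(1)$. Hence the Gr\"onwall step yields only an $O(1)$ discrepancy in $\gamma_\lambda$, which in turn produces an $O(\lambda)$ discrepancy in $z_\lambda$ --- the same size as the quantities themselves. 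Your own ``Main obstacle'' paragraph correctly identifies that additive $o(1)$ accuracy is required, yet the body of the argument only claims $o(\lambda)$, and in fact does not even achieve that.

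The trigonometric cancellation you invoke does not rescue this: the rotation $\exp(\tilde s\theta H_{\mathrm{os}})$ undoes the \emph{harmonic} part of the flow, whereas your comparison trajectory is the \emph{geodesic} $K_{\tilde s}$-flow, which carries no harmonic component to cancel against. The paper avoids the whole difficulty by differentiating the already-rotated quantity $(\widehat z(\kappa),\widehat\gamma(\kappa))=\exp(\tilde s\kappa H_{\mathrm{os},\lambda})(z_\lambda(\kappa),\gamma_\lambda(\kappa))$ in $\kappa$. Because $H_{\mathrm{os},\lambda}=\tfrac12 p^2+\tfrac12\lambda^{-2}x^2$ is exactly the free-plus-harmonic generator at the rescaled level, both the leading kinetic term and the troublesome $\lambda^{-2}z_\lambda$ term drop out, and what remains involves only $(a_{ij}-\delta_{ij})$ and $\partial a$, giving $|\partial_\kappa\widehat z|\le C\langle\kappa\rangle^{-1-\epsilon}$ and $|\partial_\kappa\widehat\gamma|\le C\langle\kappa\rangle^{-2-\epsilon}$ uniformly. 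One then concludes by dominated convergence, identifying the pointwise limits of the derivatives with the corresponding $K_{\tilde s}$-flow expressions and integrating to $(x_+,\xi_+)$. The moral is that the rotation must be applied \emph{before} estimating, not after.
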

\begin{proof}
We denote
\begin{equation}\label{202512041002}
\begin{split}
(\widehat{z}(\kappa), \widehat{\gamma}(\kappa))
&= (\widehat{z}(\kappa,\lambda,\tilde{s},\tilde{y},\tilde{\eta}),
    \widehat{\gamma}(\kappa,\lambda,\tilde{s},\tilde{y},\tilde{\eta})) \\
&= \exp\bigl(\tilde{s}(\kappa\lambda) H_{\mathrm{os},\lambda}\bigr)
   (z_{\lambda}(\kappa,\tilde{s},\tilde{y},\tilde{\eta}),
    \gamma_{\lambda}(\kappa,\tilde{s},\tilde{y},\tilde{\eta})) \\[4pt]
&= \Bigl(
      \cos\left(\tfrac{\tilde{s}\kappa}{\lambda}\right) z_{\lambda}(\kappa)
      + \lambda \sin\left(\tfrac{\tilde{s}\kappa}{\lambda}\right) \gamma_{\lambda}(\kappa),
\\[-2pt]
&\qquad\quad
      -\tfrac{1}{\lambda} \sin\left(\tfrac{\tilde{s}\kappa}{\lambda}\right) z_{\lambda}(\kappa)
      + \cos\left(\tfrac{\tilde{s}\kappa}{\lambda}\right) \gamma_{\lambda}(\kappa)
   \Bigr).
\end{split}
\end{equation}
We first show that
\[
\tfrac{\mathrm d}{\mathrm d\kappa}\widehat{z}(\kappa) = O(\langle \kappa \rangle^{-1-\epsilon}),
\qquad
\tfrac{\mathrm d}{\mathrm d\kappa}\widehat{\gamma}(\kappa) = O(\langle \kappa \rangle^{-2-\epsilon})
\]for $\kappa \in [0,\delta\lambda]$.
By Assumption \ref{250124} direct computations yield the following:
\begin{equation}\label{202512051139}
\begin{split}
\tfrac{\mathrm d}{\mathrm d\kappa}\widehat{z_{m}}(\kappa)
 &= \cos\left(\tfrac{\tilde{s}\kappa}{\lambda}\right)
    \bigl( (-\tilde{s})(a_{mj}((1-\lambda^{-1}\kappa)\tilde{s}, z_{\lambda})
        - \delta_{mj}) \gamma_{\lambda,j} \bigr) \\
 &\quad + \sin\left(\tfrac{\tilde{s}\kappa}{\lambda}\right)
    \bigl( \tilde{s}\lambda^{-1} (\partial_{m} V)
        ((1-\lambda^{-1}\kappa)\tilde{s}, z_{\lambda}) \bigr) \\
 &\quad + \sin\left(\tfrac{\tilde{s}\kappa}{\lambda}\right)
    \bigl( \tfrac{1}{2} \tilde{s} \lambda
        (\partial_{m} a_{ij})((1-\lambda^{-1}\kappa)\tilde{s}, z_{\lambda})
        \gamma_{\lambda,i} \gamma_{\lambda,j} \bigr) \\
 &= O\bigl( \langle z_{\lambda} \rangle^{-1-\epsilon}
          + \lambda^{-1} \langle z_{\lambda} \rangle^{-\epsilon}
          + \lambda \langle z_{\lambda} \rangle^{-2-\epsilon} \bigr), \\[6pt]
\tfrac{\mathrm d}{\mathrm d\kappa}\widehat{\gamma_{m}}(\kappa)
 &= -\lambda^{-1} \sin\left(\tfrac{\tilde{s}\kappa}{\lambda}\right)
    \bigl( (-\tilde{s})(a_{mj}((1-\lambda^{-1}\kappa)\tilde{s}, z_{\lambda})
        - \delta_{mj}) \gamma_{\lambda,j} \bigr) \\
 &\quad + \cos\left(\tfrac{\tilde{s}\kappa}{\lambda}\right)
    \bigl( \tfrac{1}{2} \tilde{s} (\partial_{m} a_{ij})
        ((1-\lambda^{-1}\kappa)\tilde{s}, z_{\lambda})
        \gamma_{\lambda,i} \gamma_{\lambda,j} \bigr) \\
 &\quad + \cos\left(\tfrac{\tilde{s}\kappa}{\lambda}\right)
    \bigl( \tilde{s}\lambda^{-2} (\partial_{m} V)
        ((1-\lambda^{-1}\kappa)\tilde{s}, z_{\lambda}) \bigr) \\
 &= O\bigl( \lambda^{-1}\langle z_{\lambda} \rangle^{-1-\epsilon}
          + \langle z_{\lambda} \rangle^{-2-\epsilon}
          + \langle z_{\lambda} \rangle^{-2-\epsilon} \bigr).
\end{split}
\end{equation}
From Proposition \ref{250804}, there exists a constant $C_{1}>0$ such that
\begin{equation}\label{202512031051}
\left| \tfrac{\mathrm d}{\mathrm d\kappa}\widehat{z_{m}}(\kappa) \right|
 \le C_{1} \langle \kappa \rangle^{-1-\epsilon},
\qquad
\left| \tfrac{\mathrm d}{\mathrm d\kappa}\widehat{\gamma_{m}}(\kappa) \right|
 \le C_{1} \langle \kappa \rangle^{-2-\epsilon}
\end{equation}
for any $\lambda \ge \lambda_{0}$,
$(\tilde{s},\tilde{y},\tilde{\eta}) \in \widetilde{\Omega}$
and $\kappa \in [0,\delta\lambda]$.
Moreover, for each $\kappa \in (0,\delta\lambda]$ we have
\begin{equation}\label{202512031052}
\begin{split}
\lim_{\lambda\to\infty} \tfrac{\mathrm d}{\mathrm d\kappa}\widehat{z}_{m}(\kappa)
 &= (-\tilde{s})(a_{mj}(\tilde{s},
    x((1-\kappa)\tilde{s},\tilde{s},\tilde{y},\tilde{\eta}))
    - \delta_{mj})
    \xi_{j}((1-\kappa)\tilde{s},\tilde{s},\tilde{y},\tilde{\eta}) \\
 &\quad + \tfrac{\tilde{s}^{2}\kappa}{2}
    (\partial_{m} a_{ij})(\tilde{s},
     x((1-\kappa)\tilde{s},\tilde{s},\tilde{y},\tilde{\eta}))
    \xi_{i}((1-\kappa)\tilde{s}) \xi_{j}((1-\kappa)\tilde{s}), \\[4pt]
\lim_{\lambda\to\infty} \tfrac{\mathrm d}{\mathrm d\kappa}\widehat{\gamma}_{m}(\kappa)
 &= \tfrac{\tilde{s}}{2} (\partial_{m} a_{ij})
    (\tilde{s}, x((1-\kappa)\tilde{s},\tilde{s},\tilde{y},\tilde{\eta}))
    \xi_{i}((1-\kappa)\tilde{s})
    \xi_{j}((1-\kappa)\tilde{s}).
\end{split}
\end{equation}
By \eqref{202512031051}, \eqref{202512031052} and the dominated convergence theorem,
for each $\theta \in [0,\delta]$ we conclude
\begin{equation*}
\begin{split}
\lim_{\lambda\to\infty} \widehat{z}(\theta\lambda)
 &= \tilde{y}
    + \lim_{\lambda\to\infty}
      \int_{0}^{\theta\lambda}
      \tfrac{\mathrm d}{\mathrm d\kappa}\widehat{z}(\kappa)\,\mathrm d\kappa \\
 &= \tilde{y}
    + \int_{0}^{\infty}
      \lim_{\lambda\to\infty}
      \tfrac{\mathrm d}{\mathrm d\kappa}\widehat{z}(\kappa)\,\mathrm d\kappa
  = x_{+}(\tilde{s},\tilde{y},\tilde{\eta}), \\[6pt]
\lim_{\lambda\to\infty} \widehat{\gamma}(\theta\lambda)
 &= \tilde{\eta}
    + \lim_{\lambda\to\infty}
      \int_{0}^{\theta\lambda}
      \tfrac{\mathrm d}{\mathrm d\kappa}\widehat{\gamma}(\kappa)\,\mathrm d\kappa \\
 &= \tilde{\eta}
    + \int_{0}^{\infty}
      \lim_{\lambda\to\infty}
      \tfrac{\mathrm d}{\mathrm d\kappa}\widehat{\gamma}(\kappa)\,\mathrm d\kappa
  = \xi_{+}(\tilde{s},\tilde{y},\tilde{\eta}).
\end{split}
\end{equation*}
Here we use that $x(\kappa)$ and $\xi(\kappa)$ satisfy \eqref{202511302324}.
The uniformity of the convergence with respect to $\tilde{\Omega}$ and $\theta$ follows from the fact that
\eqref{202512031051} holds uniformly on $\tilde{\Omega}$ for $\kappa \in [0,\delta \lambda]$.
This completes the proof of the assertion.
\end{proof}
We next study the behavior of $(x(\kappa),\xi(\kappa))$ 
for $\kappa$ close to 1.
\begin{proposition}\label{202512071951}
Assume that $(s,y,\eta)$ with $|s|<\pi$ is non-trapping in the sense of
Definition~\ref{25080316}. 
Fix $0< \delta <1$, and let $\widetilde{\Omega} \subset \Omega$ be as in Proposition~\ref{250804}.
Then, for any $\theta \in [\delta,1]$ and 
$(\tilde{s},\tilde{y},\tilde{\eta}) \in \widetilde{\Omega}$,
the following identity holds:
\begin{equation*}
    \lim_{\lambda \to \infty}
    \exp \bigl(s(\theta\lambda)H_{\mathrm{os},\lambda}\bigr)
    \bigl(z_{\lambda}(\theta\lambda,\tilde{s},\tilde{y},\tilde{\eta}),\gamma_{\lambda}(\theta\lambda,\tilde{s},\tilde{y},\tilde{\eta})\bigr)
    =
    \bigl(x_{+}(\tilde{s},\tilde{y},\tilde{\eta}),
          \xi_{+}(\tilde{s},\tilde{y},\tilde{\eta})\bigr),
\end{equation*}
where 
$x_{+}$ and $\xi_{+}$ are those defined in Proposition~\ref{250123}.
Moreover, this convergence is uniform on $\widetilde{\Omega}$ and $\theta \in [\delta,1]$.
\end{proposition}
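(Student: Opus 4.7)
The plan is to extend the convergence from Proposition~\ref{202512051157} at $\theta=\delta$ to the whole interval $[\delta,1]$, by showing that the trajectory $(\widehat{z}(\kappa),\widehat{\gamma}(\kappa))$ defined in \eqref{202512041002} varies negligibly on $[\delta\lambda,\lambda]$ as $\lambda\to\infty$. Since the quantity appearing in the statement equals $(\widehat{z}(\theta\lambda),\widehat{\gamma}(\theta\lambda))$, and Proposition~\ref{202512051157} supplies the uniform limit $(\widehat{z}(\delta\lambda),\widehat{\gamma}(\delta\lambda))\to(x_+,\xi_+)$, it suffices to establish
\[
\sup_{\theta\in[\delta,1]}\bigl|(\widehat{z},\widehat{\gamma})(\theta\lambda)-(\widehat{z},\widehat{\gamma})(\delta\lambda)\bigr|\longrightarrow 0\quad\text{as }\lambda\to\infty,
\]
uniformly on $\widetilde{\Omega}$.

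The key new ingredient will be an extension of Proposition~\ref{250804} to $\kappa\in[\delta\lambda,\lambda]$: after possibly shrinking $\widetilde{\Omega}$ and enlarging $\lambda_{0}$, there should exist $c>0$ such that
\[
|z_\lambda(\kappa,\tilde s,\tilde y,\tilde\eta)|\ge c\lambda \quad\text{for all }\kappa\in[\delta\lambda,\lambda],\ \lambda\ge\lambda_{0},\ (\tilde s,\tilde y,\tilde\eta)\in\widetilde{\Omega}.
\]
To prove this I would undo the rescaling: writing $\theta=\kappa/\lambda\in[\delta,1]$, one has $z_\lambda(\kappa,\tilde s,\tilde y,\tilde\eta)=z(\theta,\tilde s,\tilde y,\lambda\tilde\eta)$, so setting $\zeta(\theta)=z/\lambda$ and $\psi(\theta)=\gamma/\lambda$ the pair $(\zeta,\psi)$ solves an ODE whose short-range corrections involve $a_{ij}-\delta_{ij}$, $\partial_k a_{ij}$ and $V$ evaluated at $\lambda\zeta$. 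As $\lambda\to\infty$, on any region where $|\zeta|$ is bounded below, this system converges to the linear rotation $\dot\zeta=-\tilde s\psi$, $\dot\psi=\tilde s\zeta$ with initial datum $(0,\tilde\eta)$, whose unique solution is $\zeta_\infty(\theta)=-\sin(\tilde s\theta)\tilde\eta$. Since $|s|<\pi$, shrinking $\widetilde{\Omega}$ ensures $|\tilde s|<\pi$, and then $|\zeta_\infty(\theta)|=|\sin(\tilde s\theta)|\,|\tilde\eta|$ is bounded below by a positive constant on $[\delta,1]$. A continuous-dependence argument for ODEs, bootstrapped from $\theta=\delta$ where Proposition~\ref{250804} already provides $|\zeta(\delta)|\ge c_1\delta-c_2/\lambda$ (so the short-range bounds apply from the outset), should then promote this to $|\zeta(\theta)|\ge c/2$ uniformly on $[\delta,1]\times\widetilde{\Omega}$ for $\lambda$ large enough.

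Given this lower bound, the formulas \eqref{202512051139} together with Assumption~\ref{250124} will yield, for all $\kappa\in[\delta\lambda,\lambda]$, the uniform estimate
\[
\Bigl|\tfrac{\mathrm d}{\mathrm d\kappa}\widehat{z_m}(\kappa)\Bigr|+\Bigl|\tfrac{\mathrm d}{\mathrm d\kappa}\widehat{\gamma_m}(\kappa)\Bigr|\le C\bigl(\langle z_\lambda\rangle^{-1-\epsilon}+\lambda^{-1}\langle z_\lambda\rangle^{-\epsilon}+\lambda\langle z_\lambda\rangle^{-2-\epsilon}\bigr)\le C'\lambda^{-1-\epsilon},
\]
and integrating over an interval of length at most $(1-\delta)\lambda$ gives $|(\widehat{z},\widehat{\gamma})(\theta\lambda)-(\widehat{z},\widehat{\gamma})(\delta\lambda)|=O(\lambda^{-\epsilon})$, uniformly in $\theta\in[\delta,1]$ and on $\widetilde{\Omega}$. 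Combined with Proposition~\ref{202512051157} this will finish the proof. The chief obstacle is precisely the lower bound on $|z_\lambda|$ for $\kappa$ of order $\lambda$: the convexity argument used in the proof of Proposition~\ref{250804} genuinely breaks down in this range because the harmonic-oscillator correction $-C_{3}\lambda^{-2}|z_\lambda|^2$ appearing in \eqref{25062012} dominates the kinetic gain $c_{3}$, forcing one instead to exploit the linear-rotation structure of the rescaled equations together with the hypothesis $|s|<\pi$, which is what rules out the degenerate case in which the scaled trajectory would loop back to the origin within the parameter window $[0,1]$.
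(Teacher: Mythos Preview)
Your strategy is correct and is, at its core, the same continuity/bootstrap argument as the paper's, but run in a different coordinate system. The paper works directly in the $(\widehat z,\widehat\gamma)$ variables: under the a~priori assumption $|(\widehat z,\widehat\gamma)-(x_+,\xi_+)|<\epsilon_0<\tfrac12\min_{\widetilde\Omega}|\xi_+|$ on $[\delta\lambda,\tilde\theta\lambda]$, the inverse rotation $z_\lambda=\cos(\tilde s\kappa/\lambda)\widehat z-\lambda\sin(\tilde s\kappa/\lambda)\widehat\gamma$ together with $|\tilde s|<\pi$ gives $|z_\lambda|\ge c\lambda$ immediately, the derivative bounds \eqref{202512051139} then become $O(\lambda^{-1-\epsilon})$, and a short contradiction argument closes the bootstrap and yields the desired convergence in one stroke. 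Your detour through $(\zeta,\psi)=(z_\lambda/\lambda,\gamma_\lambda)$ is equivalent to this via the same rotation; it buys an intuitive picture of the dynamics as a perturbed harmonic rotation, at the cost of splitting the proof into two passes (first secure the lower bound on $|\zeta|$, then go back and integrate $\tfrac{\mathrm d}{\mathrm d\kappa}(\widehat z,\widehat\gamma)$). One correction worth flagging: the limiting rescaled trajectory on $[\delta,1]$ is $\zeta_\infty(\theta)=-\sin(\tilde s\theta)\,\xi_+$, not $-\sin(\tilde s\theta)\,\tilde\eta$, because the convergence of the $(\zeta,\psi)$-ODE to the linear rotation fails near $\theta=0$ (where $|\zeta|$ is small) and the nonlinear scattering there replaces the initial momentum $\tilde\eta$ by the asymptotic one $\xi_+$; this is exactly what Proposition~\ref{202512051157} records at $\theta=\delta$ after undoing the rotation. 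Since $\xi_+\neq 0$, your lower-bound conclusion is unaffected and the rest of your argument goes through.
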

\begin{proof}
  Let $0<\epsilon_{0}<\tfrac{1}{2} \min_{\widetilde{\Omega}}|\xi_{+}|$.  
  We show, by contradiction, that for any $\epsilon>0$ there exists $\lambda_{0}>0$ such that, for any $\lambda \ge \lambda_{0}$, we have
  \begin{equation}\label{202512051211}
\max_{\substack{
  \theta \in [\delta,1] \\
  (\tilde{s},\tilde{y},\tilde{\eta}) \in \tilde{\Omega}
}}
\left(
  \left|
    \widehat{z}(\theta\lambda,\tilde{s},\tilde{y},\tilde{\eta})
    - x_{+}(\tilde{s},\tilde{y},\tilde{\eta})
  \right|,
  \left|
    \widehat{\gamma}(\theta\lambda,\tilde{s},\tilde{y},\tilde{\eta})
    - \xi_{+}(\tilde{s},\tilde{y},\tilde{\eta})
  \right|
\right)
< \epsilon .
\end{equation}
  Here $\widehat{z}(\kappa)$ and $\widehat{\gamma}(\kappa)$ are defined by \eqref{202512041002}. 
  We first note that
  \begin{equation*}
    \begin{split}
      z_{\lambda}(\kappa)
      &= \cos(\tfrac{\tilde{s}\kappa}{\lambda})\widehat{z}(\kappa)
        - \lambda \sin(\tfrac{\tilde{s}\kappa}{\lambda})\widehat{\gamma}(\kappa), \\
      \gamma_{\lambda}(\kappa)
      &= \lambda^{-1} \sin(\tfrac{\tilde{s}\kappa}{\lambda}) \widehat{z}(\kappa)
         + \cos(\tfrac{\tilde{s}\kappa}{\lambda}) \widehat{\gamma}(\kappa).
    \end{split}
  \end{equation*}
  For the moment, suppose that for any $\kappa \in [\delta \lambda, \tilde{\theta} \lambda]$ and $(\tilde{s},\tilde{y},\tilde{\eta}) \in \tilde{\Omega}$ we have  
  $|\widehat{z}(\kappa,\tilde{s},\tilde{y},\tilde{\eta})-x_{+}(\tilde{s},\tilde{y},\tilde{\eta})|<\epsilon_{0}$ and  
  $|\widehat{\gamma}(\kappa,\tilde{s},\tilde{y},\tilde{\eta})-\xi_{+}(\tilde{s},\tilde{y},\tilde{\eta})|<\epsilon_{0}$ 
  for some $\tilde{\theta} \in (\delta,1]$. 
  Then
  \begin{equation*}
    |z_{\lambda}(\kappa,\tilde{s},\tilde{y},\tilde{\eta})|
    \ge |\lambda \sin(\tfrac{\tilde{s}\kappa}{\lambda})|\bigl(|\xi_{+}(\tilde{s},\tilde{y},\tilde{\eta})| -\epsilon_{0}\bigr)
      -\bigl(|x_{+}(\tilde{s},\tilde{y},\tilde{\eta})|+\epsilon_{0}\bigr).
  \end{equation*}
  Hence, by choosing $\lambda_{0}$ sufficiently large and $\lambda \ge \lambda_{0}$, we obtain
  \begin{equation*}
    |z_{\lambda}(\kappa)| \ge c_{1} \lambda
  \end{equation*}
  for any $\kappa \in [\delta\lambda, \tilde{\theta} \lambda]$ and some constant $c_{1}>0$ depending only on $x_{+}$ and $\xi_{+}$.  
  Here we use the fact that $|\tilde{s}|<\pi$.
  Then, using formula \eqref{202512051139}, we obtain
  \begin{equation}\label{202512051513}
    \left|\tfrac{\mathrm d}{\mathrm d\kappa} \widehat{z}(\kappa)\right| \le C_{1} \lambda^{-1-\epsilon},
    \qquad
    \left|\tfrac{\mathrm d}{\mathrm d\kappa} \widehat{\gamma}(\kappa)\right| \le C_{1} \lambda^{-2-\epsilon}
  \end{equation}
  for any $\kappa \in [\delta\lambda , \tilde{\theta} \lambda]$ and some constant $C_{1}>0$.
  Assume the negation of \eqref{202512051211}. Then there exist a sequence $\lambda_{n} \nearrow \infty$ and  
  $0<\epsilon_{1}<\epsilon_{0}$ such that
  \begin{equation}\label{202512051221}
    \max_{\substack{
  \theta \in [\delta,1] \\
  (\tilde{s},\tilde{y},\tilde{\eta}) \in \tilde{\Omega}
}}
    \left(
      |\widehat{z}(\theta \lambda_{n},\tilde{s},\tilde{y},\tilde{\eta}) - x_{+}(\tilde{s},\tilde{y},\tilde{\eta})|,
      |\widehat{\gamma}(\theta \lambda_{n},\tilde{s},\tilde{y},\tilde{\eta}) - \xi_{+}(\tilde{s},\tilde{y},\tilde{\eta})|
    \right)
    \ge \epsilon_{1}
  \end{equation}
  for all $n$.
  From Proposition \ref{202512051157}, we obtain
  \begin{equation}\label{202512051222}
    \max_{(\tilde{s},\tilde{y},\tilde{\eta}) \in \tilde{\Omega}}
    \left(
      |\widehat{z}(\delta \lambda_{n},\tilde{s},\tilde{y},\tilde{\eta}) - x_{+}(\tilde{s},\tilde{y},\tilde{\eta})|,
      |\widehat{\gamma}(\delta \lambda_{n},\tilde{s},\tilde{y},\tilde{\eta}) - \xi_{+}(\tilde{s},\tilde{y},\tilde{\eta})|
    \right)
    < \tfrac{\epsilon_{1}}{2}
  \end{equation}
  for sufficiently large $n$.
  From \eqref{202512051221} and \eqref{202512051222}, there exists  
  $\theta_{n} \in (\delta,1] $  
  such that
  \begin{equation*}
    \max_{(\tilde{s},\tilde{y},\tilde{\eta}) \in \tilde{\Omega}}
    \left(
      |\widehat{z}(\theta_{n} \lambda_{n},\tilde{s},\tilde{y},\tilde{\eta}) - x_{+}(\tilde{s},\tilde{y},\tilde{\eta})|,
      |\widehat{\gamma}(\theta_{n} \lambda_{n},\tilde{s},\tilde{y},\tilde{\eta}) - \xi_{+}(\tilde{s},\tilde{y},\tilde{\eta})|
    \right)
    = \epsilon_{1},
  \end{equation*}
  and
  \begin{equation*}
    \max_{(\tilde{s},\tilde{y},\tilde{\eta}) \in \tilde{\Omega}}
    \left(
      |\widehat{z}(\tilde{\kappa},\tilde{s},\tilde{y},\tilde{\eta}) - x_{+}(\tilde{s},\tilde{y},\tilde{\eta})|,
      |\widehat{\gamma}(\tilde{\kappa},\tilde{s},\tilde{y},\tilde{\eta}) - \xi_{+}(\tilde{s},\tilde{y},\tilde{\eta})|
    \right)
    \le \epsilon_{1}
  \end{equation*}
  for any $\tilde{\kappa} \in [\delta \lambda_{n}, \theta_{n} \lambda_{n}]$.
  From \eqref{202512051513} and \eqref{202512051222}, we have
  \begin{equation*}
    \begin{split}
      |\widehat{z}(\theta_{n} \lambda_{n},\tilde{s},\tilde{y},\tilde{\eta}) - x_{+}(\tilde{s},\tilde{y},\tilde{\eta})|
      &= \left| \int_{\delta \lambda_{n}}^{\theta_{n} \lambda_{n}} \tfrac{\mathrm d}{\mathrm d\kappa} \widehat{z}(\kappa)\,\mathrm d\kappa
              + \widehat{z}(\delta \lambda_{n}) - x_{+}(\tilde{s},\tilde{y},\tilde{\eta}) \right| \\
      &\le C_{2} (1-\delta) \lambda_{n}^{-\epsilon} + \tfrac{\epsilon_{1}}{2}, \\
      | \widehat{\gamma}(\theta_{n}\lambda_{n},\tilde{s},\tilde{y},\tilde{\eta}) - \xi_{+}(\tilde{s},\tilde{y},\tilde{\eta})|
      &= \left| \int_{\delta \lambda_{n}}^{\theta_{n} \lambda_{n}} \tfrac{\mathrm d}{\mathrm d\kappa} \widehat{\gamma}(\kappa)\,\mathrm d\kappa
              + \widehat{\gamma}(\delta \lambda_{n}) - \xi_{+}(\tilde{s},\tilde{y},\tilde{\eta}) \right| \\
      &\le C_{2} (1-\delta) \lambda_{n}^{-1-\epsilon} + \tfrac{\epsilon_{1}}{2},
    \end{split}
  \end{equation*}
  for some constant $C_{2}>0$ depending only on $x_{+}$ and $\xi_{+}$.
  Hence,
  \begin{equation*}
    \begin{split}
      \epsilon_{1}
      &= \max_{(\tilde{s},\tilde{y},\tilde{\eta}) \in \tilde{\Omega}}
         \left(
           |\widehat{z}(\theta_{n} \lambda_{n},\tilde{s},\tilde{y},\tilde{\eta}) - x_{+}(\tilde{s},\tilde{y},\tilde{\eta})|,
           |\widehat{\gamma}(\theta_{n} \lambda_{n},\tilde{s},\tilde{y},\tilde{\eta}) - \xi_{+}(\tilde{s},\tilde{y},\tilde{\eta})|
         \right) \\
      &\le \tfrac{\epsilon_{1}}{2}
         + C_{2}(1-\delta)\bigl(\lambda_{n}^{-\epsilon} + \lambda_{n}^{-1-\epsilon}\bigr)
    \end{split}
  \end{equation*}
  for all $n$.  
  By choosing $n$ sufficiently large, this yields a contradiction.  
  Thus the assertion follows.
\end{proof}

\subsection{Conclusions}

We denote the flow generated by the system \eqref{25042320}--\eqref{25042323} 
with initial data \eqref{25042324} by 
$F_l\colon \mathbb R\times \mathbb R^{2(1+d)}\to \mathbb R^{2(1+d)}$. 
We are interested in the asymptotic behavior of 
\[F_l(\kappa,s,y,\lambda^2\sigma,\lambda\eta)\]
as $\lambda\to\infty$. For that we introduce the scaling operator on $\mathbb R^{2(1+d)}$:
\[
\Theta_\lambda\colon \mathbb R^{2(1+d)}\to\mathbb R^{2(1+d)},\ \ (s,y,\sigma,\eta)\mapsto (s,y,\lambda^2\sigma,\lambda\eta)
,
\]
and consider the limit of the composition
\[
\lim_{\lambda\to\infty}\Theta_\lambda^{-1}F_l\Theta_\lambda. 
\]
\begin{proposition}\label{202512091350}
  The following three properties hold.
  \leavevmode
\begin{enumerate}

\item
For any $(s,y,\eta)\in \Omega$ with $|s|<\pi$ and any $\sigma\in\mathbb{R}$, 
there exists a neighborhood $\widehat{\Omega}$ of $(s,y,\sigma,\eta)$ such that 
for any $(\hat{s},\hat{y},\hat{\sigma},\hat{\eta}) \in \widehat{\Omega}$
\[
\lim_{\substack{\lambda\to\infty \\[2pt] \kappa\in(0,1]}}
\Theta_\lambda^{-1} F_l \Theta_\lambda(\kappa,\hat{s},\hat{y},\hat{\sigma},\hat{\eta})
=
\bigl(\hat{s},\; x_+(\hat{s},\hat{y},\hat{\eta}),\; \Sigma(\hat{s},\hat{y},\hat{\sigma},\hat{\eta}),\; \xi_+(\hat{s},\hat{y},\hat{\eta})\bigr),
\]
where $\Sigma(\hat{s},\hat{y},\hat{\sigma},\hat{\eta})$ is given by
\[
\Sigma(\hat{s},\hat{y},\hat{\sigma},\hat{\eta})
=
\hat{\sigma}
+ \tfrac12 a_{ij}(\hat{s},\hat{y}) \hat{\eta}_i \hat{\eta}_j
- \tfrac12 |\xi_+(\hat{s},\hat{y},\hat{\eta})|^2 .
\]

\item 
Moreover, for any  
$\alpha \in \mathbb{N}_0^{2(1+d)}$, $(\hat{s},\hat{y},\hat{\sigma},\hat{\eta}) \in \widehat{\Omega}$ and $\kappa \in [0,1]$,
\[
\begin{aligned}
\partial^\alpha (\Theta_\lambda^{-1} F_l \Theta_\lambda)
      (\kappa,\hat{s},\hat{y},\hat{\sigma},\hat{\eta})
\rightarrow
\bigl(
  \partial^\alpha \hat{s},
  \partial^\alpha x_{+}(\hat{s},\hat{y},\hat{\eta}),
  \partial^\alpha \Sigma(\hat{s},\hat{y},\hat{\eta}),
  \partial^\alpha \xi_{+}(\hat{s},\hat{y},\hat{\eta})
\bigr)
\end{aligned}
\]
as $\lambda \to \infty$, and the convergence is uniform on 
$\widehat{\Omega}$ and $\kappa \in [0,1]$.
\item
For any $(s,y,\eta)\in \Omega$ with $|s|<\pi$, $\sigma \in \mathbb{R}$
and $\kappa\in[0,1]$, 
the mapping
\[
\lim_{\substack{\lambda\to\infty }}
(\Theta_\lambda^{-1} F_l \Theta_\lambda)(\kappa,\cdot ,\cdot,\cdot,\cdot)
\]
is a local diffeomorphism in a neighborhood of $(s,y,\sigma,\eta)$.

\end{enumerate}

\end{proposition}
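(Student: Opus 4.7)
The plan is to combine the reduction of Section~2.1 (specifically $t\equiv s$ from \eqref{25042320} and the $\rho$-conservation formula \eqref{2511261553}) with the short- and long-time asymptotic results of Propositions \ref{202512051157} and \ref{202512071951}, and then verify non-degeneracy of the limiting map. Throughout I would work with the decomposition \eqref{202512071953}, which factors the $(x,\xi)$-flow through the scaled harmonic oscillator $H_{\mathrm{os},\lambda}$ acting on $(z_\lambda,\gamma_\lambda)$.

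For part (1), the $t$- and $(x,\lambda^{-1}\xi)$-components fall out immediately from Propositions \ref{202512051157}--\ref{202512071951} applied with $\theta=\kappa$, after splitting $(0,1]$ into $(0,\delta]$ and $[\delta,1]$. The nontrivial component is $\lambda^{-2}\tau(\kappa)$: I would substitute the inversion formulas expressing $z_\lambda(\lambda\kappa)$ and $\gamma_\lambda(\lambda\kappa)$ in terms of $(\widehat z,\widehat\gamma)(\lambda\kappa)$ into \eqref{2511261553}, invoke Assumption \ref{250124} together with the lower bound $|z_\lambda(\lambda\kappa)|\ge c_1\lambda\kappa-c_2$ of Proposition \ref{250804} to obtain $a_{ij}((1-\kappa)\hat s,z_\lambda(\lambda\kappa))\to\delta_{ij}$ and $\lambda^{-2}V\to 0$ for $\kappa>0$, and use $|\hat s|<\pi$ to keep the $\cos(\hat s\kappa),\sin(\hat s\kappa)$ factors uniformly bounded. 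The three surviving quadratic-in-$\xi_+$ contributions, with coefficients $\tfrac\kappa2$, $\tfrac{1-\kappa}{2}$, and $\tfrac12$ (the last arising from $\lambda^{-2}|z_\lambda(\lambda\kappa)|^2\to\sin^2(\hat s\kappa)|\xi_+|^2$), combine by the Pythagorean identity $\tfrac{\kappa}{2}\cos^2(\hat s\kappa)+\tfrac{1-\kappa}{2}\cos^2(\hat s\kappa)+\tfrac12\sin^2(\hat s\kappa)=\tfrac12$ to produce exactly $\Sigma$.

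For part (2), the idea is to differentiate the ODE system \eqref{2506191918}--\eqref{2506191919} in the parameters $(\hat s,\hat y,\hat\eta)$ and iterate the arguments of Propositions \ref{202512051157}--\ref{202512071951}. Each order of differentiation yields a linear Jacobi-type ODE for $\partial^\alpha(z_\lambda,\gamma_\lambda)$ whose coefficients are already-controlled derivatives of $a_{ij}$ and $V$ along the flow; the Mourre lower bound of Proposition \ref{250804} is again the crucial input, producing integrable majorants of the type $\langle\kappa\rangle^{-1-|\alpha|-\epsilon}$ on $[0,\delta\lambda]$ that allow dominated convergence. An induction on $|\alpha|$ coupled with the bootstrap/convexity structure of Proposition \ref{202512071951} then delivers uniform convergence of derivatives on $\widehat\Omega\times[0,1]$; the $\tau$-derivatives follow analogously by differentiating \eqref{2511261553}.

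For part (3), the Jacobian of the limit map has block-triangular shape with both the $\hat s\mapsto\hat s$ block and the $\hat\sigma\mapsto\Sigma$ entry equal to $1$, so non-degeneracy reduces to $\det\partial_{(\hat y,\hat\eta)}(x_+,\xi_+)\ne 0$ at the base point. This is a standard non-trapping scattering fact: the asymptotic map $(y,\eta)\mapsto(x_+,\xi_+)$ is a local symplectomorphism on $\Omega$, as follows from the preservation of the symplectic form along the Hamiltonian flow of $K_{\hat s}$ together with the short-range decay of $a_{ij}-\delta_{ij}$ (cf.\ \cite{N2}). The main obstacle will be part (2): propagating uniform-in-$\kappa$ integrable bounds through higher-order Jacobi systems on the long interval $[0,\lambda]$ before rescaling, since the short-range decay $\langle x\rangle^{-1-|\alpha|-\epsilon}$ must absorb arbitrary numbers of variational insertions, and the neighborhood $\widehat\Omega$ must be shrunk compatibly across all induction steps.
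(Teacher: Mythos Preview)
Your approach is essentially the same as the paper's. For parts (1) and (2) it proceeds identically: the $(x,\lambda^{-1}\xi)$-components come from Propositions~\ref{202512051157} and~\ref{202512071951} via the factorization \eqref{202512071953}, the $\lambda^{-2}\tau$-limit is read off from \eqref{2511261553} (the paper leaves the trigonometric cancellation you spell out implicit), and the derivatives are handled by induction on $|\alpha|$ through the differentiated ODE system with integrable coefficients along the Mourre-bounded flow. The only difference is in part (3): the paper first observes that the finite-$\lambda$ Jacobian of $\Theta_\lambda^{-1}F_l\Theta_\lambda$ is invertible (since $F_l$ is a Hamiltonian flow) and then passes to the limit using the uniform convergence established in part (2), whereas you work directly at the limit via the block-triangular structure and the fact that $(\hat y,\hat\eta)\mapsto(x_+,\xi_+)$ is a local symplectomorphism. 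Both arguments are valid and rely on the same symplectic input; yours isolates more explicitly which $2d\times 2d$ block carries the non-degeneracy, while the paper's avoids appealing separately to the scattering map and instead leverages part (2). The difficulty you anticipate in part (2) is real but not severe: the paper disposes of it in two lines precisely because the integrable majorants $\langle\kappa\rangle^{-1-\epsilon}$ from Proposition~\ref{250804} persist under differentiation thanks to the extra decay in Assumption~\ref{250124}.
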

  \begin{proof}
\noindent\text{1.}\;
We set $\widehat{\Omega}
= \{(\hat{s},\hat{y},\hat{\sigma},\hat{\eta}) \in \mathbb{R}^{2(1+d)}
\mid (\hat{s},\hat{y},\hat{\eta}) \in \tilde{\Omega},\; |\hat{\sigma}-\sigma| < 1 \}$.
Here $\tilde{\Omega}$ is as in Proposition~\ref{250804}.
Let $(\hat{s},\hat{y},\hat{\sigma},\hat{\eta}) \in \widehat{\Omega}$ be fixed.
From Propositions \ref{202512051157} and \ref{202512071951}, together with 
\eqref{202512071953}, we obtain
\[
  \bigl(x(\kappa,\hat{s},\hat{y},\hat{\eta}),\, \lambda^{-1}\xi(\kappa,\hat{s},\hat{y},\hat{\eta})\bigr)
  \longrightarrow 
  \bigl(x_{+}(\hat{s},\hat{y},\hat{\eta}),\, \xi_{+}(\hat{s},\hat{y},\hat{\eta})\bigr)
  \quad \text{as } \lambda \to \infty.
\]
Moreover, by \eqref{2511261553} we have 
\begin{equation}\label{202512072113}
  \begin{split} 
   \lambda^{-2}\tau(\kappa,\hat{s},\hat{y},\hat{\eta}) 
  &=\hat{\sigma}
    +\tfrac{1}{2} a_{ij}(\hat{s},\hat{y})\hat{\eta}_{i}\hat{\eta}_{j}
    -\tfrac{\kappa}{2} |\gamma_{\lambda}(\lambda \kappa)|^{2}
    -\tfrac{1}{2}\lambda^{-2}|z_{\lambda}(\lambda \kappa)|^{2}  \\
  &\quad -(1-\kappa)\Bigl(
       \tfrac12 a_{ij}((1-\kappa)\hat{s},z_{\lambda}(\lambda \kappa))
       \gamma_{\lambda,i}(\lambda\kappa)
       \gamma_{\lambda,j}(\lambda\kappa)  \\
  &\quad\qquad\qquad
       + \lambda^{-2}V((1-\kappa)\hat{s},z_{\lambda}(\lambda \kappa))
     \Bigr) \\  
  &\quad + \lambda^{-2} V(\hat{s},\hat{y})
    +\tfrac{1}{2}\lambda^{-2}|\hat{y}|^{2} \\
  &\longrightarrow 
  \Sigma(\hat{s},\hat{y},\hat{\sigma},\hat{\eta})
  \quad \text{as } \lambda \to \infty.
  \end{split}
\end{equation}
Hence the assertion follows.

\smallskip

\noindent\text{2.}\;
By \eqref{25042320} and \eqref{202512072113} 
it suffices to consider 
$\partial^{\alpha} x(\kappa,\hat{s},\hat{y},\hat{\eta})$ and 
$\lambda^{-1}\partial^{\alpha} \xi(\kappa,\hat{s},\hat{y},\hat{\eta})$, 
which are uniformly convergent on $\widehat{\Omega}$ and $\kappa \in [0,1]$.
The functions $x(\kappa)$ and $\xi(\kappa)$ satisfy 
\eqref{25042321} and \eqref{25042323}, and 
$\partial^{\alpha} x$ and $\lambda^{-1}\partial^{\alpha} \xi$ satisfy 
similar first order ODEs whose coefficients are integrable in $\kappa$.
Hence, using induction on the order $|\alpha|$ of derivatives, 
the assertion follows.

\smallskip

\noindent\text{3.}\;
First, we consider the Jacobian matrix of 
$
(\Theta_{\lambda}^{-1} F_l \Theta_{\lambda})
$
with respect to $(\hat{s},\hat{y},\hat{\sigma},\hat{\eta})$.
It is invertible if and only if that of 
$
(x(\kappa), \lambda^{-1}\xi(\kappa))
$
with respect to $(\hat{y},\hat{\eta})$ is invertible.
By \eqref{25042321} and \eqref{25042323}, the Jacobian matrix for $x$ and $\lambda^{-1}\xi$
is invertible near $(s,y,\sigma,\eta)$ for sufficiently large $\lambda$.
Hence the Jacobian matrix of 
$
(\Theta_{\lambda}^{-1} F_l \Theta_{\lambda})
$
is also invertible.
Moreover, by the assertion~2, the Jacobian matrix of 
$
\Theta_{\lambda}^{-1} F_l \Theta_{\lambda}
$
converges uniformly to the Jacobian matrix of
$
\lim_{\lambda\to\infty} \bigl(\Theta_{\lambda}^{-1} F_l \Theta_{\lambda}\bigr).
$
Therefore, the assertion follows.
\end{proof}
\section{Proof}
\subsection{Construction of symbol}

Here let us construct a symbol $b$ defining the operator \eqref{25062214} 
that satisfies the equation \eqref{25042517}. 
We are going to construct a symbol $b$ of the form 
\[
b\sim \sum_{j=0}^\infty h^{\epsilon j}b_j.
\]
By the pseudodifferential symbol calculus, the principal symbol $b_{0}$ should 
satisfy the transport equation
\begin{equation}\label{25062221}
\partial_\kappa b_{0}
+ (\partial_\tau l)\,\partial_t b_{0}
+ (\partial_\xi l)\,\partial_x b_{0}
- h^{2}(\partial_t l)\,\partial_\tau b_{0}
- h(\partial_x l)\,\partial_\xi b_{0}
= 0,
\end{equation}
with $b_{0}(0,t,x,\tau,\xi)=a_{0}(t,x,\tau,\xi)$,
where $l$ is from \eqref{250619} and $a_{0}$ is defined in Section~\ref{202512081453}.
By using the flow $F_l$ we can rewrite the equation \eqref{25062221} as 
\[
\tfrac{\mathrm d}{\mathrm d\kappa}\bigl\{b_0(\kappa,\Theta_hF_l)\bigr\}=0,
\]
so that  
\[
b_0(\kappa,\Theta_hF_l(\kappa,t,x,\tau,\xi))=b_0(0,\Theta_h(t,x,\tau,\xi))=a_0(t,x,h^2\tau,h\xi)
.
\]
Hence 
\begin{equation*}
b_0(\kappa,t,x,\tau,\xi)=a_0(\Theta_hF_l^{-1}(\kappa)\Theta_h^{-1}(t,x,\tau,\xi))
,
\label{25062222}
\end{equation*}
where $F_{l}(\kappa)$ denotes $F_{l}(\kappa,\cdot,\cdot,\cdot,\cdot)$ for brevity.

\begin{proposition}\label{202512051746}
  There exists $b(\kappa,\cdot,\cdot,\cdot,\cdot) \in C^{\infty}_{\mathrm c}(\mathbb{R}^{2(1+d)}) $ for $\kappa \in [0,1]$ such that
  \begin{enumerate}
\item
  $b(0,t,x,\tau,\xi) = a_0(t,x,\tau,\xi).$
\item
$b(\kappa,\cdot,\cdot,\cdot,\cdot)$ is supported in $(\Theta_{h} F_{l}(\kappa)  \Theta_{h}^{-1})(\operatorname{supp} a_{0})$.
\item 
For any $\alpha \in \mathbb{N}_0^{2(1+d)}$, there exist constants
$C_\alpha>0$ and $h_\alpha>0$ such that
\[
\bigl|\partial^\alpha b(\kappa,t,x,\tau,\xi)\bigr|
  \le C_\alpha
\]
for all $\kappa\in[0,1]$, $(t,x,\tau,\xi)\in\mathbb{R}^{2(1+d)}$ and $h \in (0,h_{\alpha}]$.
\item 
The principal symbol of $b(\kappa,t,x,\tau,\xi)$ is $b_{0}(\kappa,t,x,\tau,\xi)$.
That is, for any $\alpha \in \mathbb{N}_0^{2(1+d)}$, there exist constants
$C_\alpha>0$ and $h_\alpha>0$ such that
\begin{equation*}
   \bigl|\partial^\alpha \bigl(b(\kappa,t,x,\tau,\xi)
   - b_{0}(\kappa,t,x,\tau,\xi)\bigr)\bigr|
  \le C_\alpha h^{\varepsilon}
\end{equation*}
for all $\kappa\in[0,1]$, $(t,x,\tau,\xi)\in\mathbb{R}^{2(1+d)}$
and $h \in (0,h_{\alpha}]$.
\item
If we set $B(\kappa) = b^{W}(\kappa,t,x,h^{2}p_{t},hp_{x})$, Then
\begin{equation*}
  \left\|\tfrac{\mathrm d}{\mathrm d\kappa}B(\kappa)+\mathrm i[L(\kappa),B(\kappa)] \right\|_{L^2_{t,x}} = O(h^{\infty}) 
\end{equation*}
as $h \rightarrow 0$, uniformly in $\kappa \in [0,1]$.
\end{enumerate}
\end{proposition}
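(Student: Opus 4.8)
The plan is to construct the symbol $b$ as an asymptotic sum $b \sim \sum_{j\ge 0} h^{\epsilon j} b_j$, determining the terms $b_j$ recursively by peeling off the transport equation \eqref{25062221} order by order, then using Borel summation to obtain an honest symbol, and finally verifying that the resulting operator satisfies property~5. First I would fix, once and for all, the neighborhood $\widehat\Omega$ and the limiting diffeomorphism from Proposition~\ref{202512091350}; by a partition of unity and shrinking the support of $a_0$ we may assume $\operatorname{supp} a_0$ is contained in an arbitrarily small neighborhood of the base point $(s,y,-\tfrac12 a_{ij}(s,y)\eta_i\eta_j,\eta)$, so that $\Theta_h F_l^{-1}(\kappa)\Theta_h^{-1}$ is a well-defined diffeomorphism on a fixed neighborhood of $\operatorname{supp} a_0$ for all small $h$ and all $\kappa\in[0,1]$, with all derivatives bounded uniformly in $h,\kappa$ by Proposition~\ref{202512091350}(2)--(3). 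This makes the formula $b_0(\kappa,\cdot) = a_0\circ(\Theta_h F_l^{-1}(\kappa)\Theta_h^{-1})$ legitimate and gives properties~1,~2, and~3 for $b_0$ immediately; property~4 at zeroth order is then the statement that $b_0$ differs from its $\lambda=\infty$ limit by $O(h^\epsilon)$, which again follows from the uniform convergence of $\Theta_\lambda^{-1}F_l\Theta_\lambda$ and its derivatives together with the $\langle\kappa\rangle^{-1-\epsilon}$-type decay rates recorded in \eqref{202512031051} (these decay rates are precisely what produces the power $h^\epsilon$ rather than merely $o(1)$).

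Next I would set up the recursion for the lower-order terms. Writing $L(\kappa) = l^{\mathrm W}(\kappa,t,x,h^2 p_t, h p_x)$ with exact symbol $l$ from \eqref{250619}, the Weyl calculus for the scaled quantization $a\mapsto a^{\mathrm W}(t,x,h^2 p_t, h p_x)$ gives $\tfrac{\mathrm d}{\mathrm d\kappa}B + \mathrm i[L,B]$ as the quantization of $\partial_\kappa b + \{l,b\}_h + (\text{lower order})$, where $\{l,b\}_h = (\partial_\tau l)\partial_t b + (\partial_\xi l)\partial_x b - h^2(\partial_t l)\partial_\tau b - h(\partial_x l)\partial_\xi b$ is the scaled Poisson bracket appearing in \eqref{25062221}, and the correction terms from the Moyal expansion carry at least one extra factor of $h$ (because the scaling assigns $h$ to $\partial_x$, $h^2$ to $\partial_t$, etc., and each commutator bracket beyond the first produces such a factor). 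Collecting powers of $h^\epsilon$, one gets that $b_j$ for $j\ge 1$ must solve the \emph{same} transport equation as $b_0$ but with a source term $r_j$ built from $b_0,\dots,b_{j-1}$ and their derivatives; by Duhamel along the flow $F_l$ this is solved by $b_j(\kappa,\cdot) = \int_0^\kappa r_j(\kappa',\cdot)\circ(\Theta_h F_l(\kappa)F_l^{-1}(\kappa')\Theta_h^{-1})\,\mathrm d\kappa'$, which inherits compact support inside $(\Theta_h F_l(\kappa)\Theta_h^{-1})(\operatorname{supp} a_0)$ and uniform-in-$h$ derivative bounds. One then defines $b$ by a Borel sum $b = \sum_j \chi(h^{-\epsilon}/R_j)\, h^{\epsilon j} b_j$ with $R_j\to\infty$ fast enough, so that $b - \sum_{j<N} h^{\epsilon j} b_j = O(h^{\epsilon N})$ in every $C^k$ seminorm; this yields properties~3 and~4 for $b$ itself.

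Property~5 is then the verification step: with $b$ so constructed, $\tfrac{\mathrm d}{\mathrm d\kappa}B + \mathrm i[L,B]$ is the quantization of a symbol that, by construction of the $b_j$, vanishes to all orders in $h^\epsilon$ modulo a remainder whose symbol is $O(h^\infty)$ in the relevant symbol seminorms; applying the Calder\'on--Vaillancourt theorem (uniformly in $\kappa$, using the uniform seminorm bounds) gives the $L^2$ operator-norm estimate $O(h^\infty)$ uniformly in $\kappa\in[0,1]$. One subtlety to be careful about here is that $L(\kappa)$ is \emph{not} compactly supported in phase space, so its commutator with the compactly supported $B(\kappa)$ must be handled by expanding the Moyal product on the support of $b$ and controlling the growth of $l$ and its derivatives on that support via Assumption~\ref{250124}; this is exactly where the short-range decay of $a_{ij}-\delta_{ij}$ and $V$ is used again, to ensure the symbol of the commutator stays bounded (indeed, decaying in $h$) rather than growing with $x$.

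The main obstacle I expect is \textbf{property~4}, i.e.\ identifying $b_0$'s principal part with the $\lambda\to\infty$ limit to precision $O(h^\epsilon)$ uniformly in $\kappa\in[0,1]$: the flow $\Theta_h F_l(\kappa)\Theta_h^{-1}$ degenerates as $\kappa\to 1$ (the factor $\lambda\sin(\tilde s\kappa/\lambda)$ and similar terms), and the rate of convergence in Propositions~\ref{202512051157} and~\ref{202512071951} must be tracked quantitatively rather than merely qualitatively. The resolution is that those propositions, read together with the explicit $\langle\kappa\rangle^{-1-\epsilon}$ and $\langle\kappa\rangle^{-2-\epsilon}$ bounds in \eqref{202512031051}, do give a convergence rate of order $\lambda^{-\epsilon}$ (see the estimates $C_2(1-\delta)\lambda_n^{-\epsilon}$ in the proof of Proposition~\ref{202512071951}), and with $\lambda = h^{-1}$ this is precisely the $h^\epsilon$ in property~4; one must also differentiate these estimates in the parameters, which is covered by Proposition~\ref{202512091350}(2). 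The second-order nuisance, already flagged above, is the non-compactness of $L(\kappa)$ in the commutator, but that is routine given Assumption~\ref{250124}.
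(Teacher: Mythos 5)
Your proposal follows essentially the same approach as the paper's: solve the transport equation for $b_0$ by pullback along the flow $F_l$, iteratively construct corrections $b_j$ solving the same transport equation with source terms coming from the Moyal remainder of the previous step (gaining $h^{\epsilon}$ per step owing to the short-range decay of $a_{ij}-\delta_{ij}$ and $V$ on the transported support), asymptotically sum, and verify the operator estimate. One minor misreading worth flagging: property~4 as stated only asserts $b - b_0 = O(h^{\epsilon})$ in every seminorm, which is immediate from the asymptotic sum $b\sim\sum_{j\ge 0}h^{\epsilon j}b_j$ with $b_j$ uniformly bounded; it is \emph{not} the statement that $b_0$ is within $O(h^{\epsilon})$ of its $h\to 0$ limit, so the ``main obstacle'' you identify is a harder claim than the proposition requires (that rate of convergence is instead used qualitatively, via Proposition~\ref{202512091350}, in the final step of the proof of Theorem~\ref{250208} to ensure $b(1,\cdot)$ is nonvanishing near $(s,x_+,-\tfrac12|\xi_+|^2,\xi_+)$).
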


\begin{proof}
If necessary, we may assume that $\operatorname{supp} a_{0} \subset \widehat{\Omega}$,
where $\widehat{\Omega}$ is the neighborhood specified in Proposition~\ref{202512091350}.
Then, by Proposition~\ref{202512091350}, for any multi-index
$\alpha \in \mathbb{N}^{2(1+d)}_{0}$ there exists $h_{0}>0$ such that for any
$\kappa \in [0,1]$, $(t,x,\tau,\xi) \in \widehat{\Omega}$, and $h \in (0,h_{0}]$,
we have
\begin{equation*}
  \bigl|\partial^{\alpha} b_{0}(\kappa,t,x,\tau,\xi)\bigr|
  \le C_{\alpha}.
\end{equation*}
We construct the symbol $b$ by an iterative procedure as follows.
We choose $\Gamma$ sufficiently large so that
$b_{0}(\kappa,t,x,\tau,\xi)=0$ for all $\kappa \in [0,1]$,
$(\tau,\xi) \in \mathbb{R}^{1+d}$, and $(t,x) \notin \Gamma$.
Define
\begin{equation*}
  r_{0}^{W}(\kappa,t,x,p_{t},p_{x})
  = \tfrac{\partial}{\partial \kappa}
    b^{W}_{0}(\kappa,t,x,h^{2}p_{t},hp_{x})
    + i\bigl[L(\kappa),
      b^{W}_{0}(\kappa,t,x,h^{2}p_{t},hp_{x})\bigr].
\end{equation*}
Then, by the asymptotic expansion formula, for any
$\gamma = (\alpha,\tilde{\alpha},\beta,\tilde{\beta}) \in \mathbb{N}^{2(1+d)}_{0}$ there exist constants
$C_{\gamma}$ and $h_{\gamma} > 0$ such that
\begin{equation*}
  \bigl|\partial^{\alpha}_{t} \partial^{\tilde{\alpha}}_{x} \partial^{\beta}_{\tau} \partial^{\tilde{\beta}}_{\xi}  r_{0}(\kappa,t,x,\tau,\xi)\bigr|
  \le C_{\gamma} h^{\epsilon+\tilde{|\beta|}+ 2|\beta|}
\end{equation*}
for all $\kappa \in [0,1]$, $(t,x) \in \Gamma$,
$(\tau,\xi) \in \mathbb{R}^{1+d}$, and $h \in (0,h_{\gamma}]$.
Moreover, $r_{0}$ is essentially supported in
$\operatorname{supp} b_{0}(\kappa,\cdot,\cdot,\cdot,\cdot)$,
that is, modulo $O(h^{\infty})$ terms.
Next, we solve the transport equation
\begin{equation*}
  \partial_\kappa (h^{\epsilon} b_{1})
+ (\partial_\tau l)\,\partial_t (h^{\epsilon} b_{1})
+ (\partial_\xi l)\,\partial_x (h^{\epsilon} b_{1})
- h^{2}(\partial_t l)\,\partial_\tau (h^{\epsilon} b_{1})
- h(\partial_x l)\,\partial_\xi (h^{\epsilon} b_{1})
  = - r_{0},
\end{equation*}
with the initial condition $b_{1}(0,t,x,\tau,\xi)=0$.
Then, for any multi-index $\alpha$, there exist 
$C_{\alpha} > 0$ and $h_{\alpha} > 0$ such that
\begin{equation*}
  \bigl|\partial^{\alpha} b_{1}(\kappa,t,x,\tau,\xi)\bigr|
  \le C_{\alpha}
\end{equation*}
for all $\kappa \in [0,1]$, $(t,x) \in \Gamma$,
$(\tau,\xi) \in \mathbb{R}^{1+d}$, and $h \in (0,h_{\alpha}]$.
Moreover, $b_{1}$ is also essentially supported in
$\operatorname{supp} b_{0}$.
If we set $r_{1}^{W}$ 
\begin{equation*}
  \begin{split}
    r^{W}_{1}(\kappa,t,x,p_{t},p_{x})&= 
   \tfrac{\partial}{\partial \kappa}
    h^{\epsilon} b^{W}_{1}(\kappa,t,x,h^{2}p_{t},hp_{x})
    + i\bigl[L(\kappa),
      h^{\epsilon} b^{W}_{1}(\kappa,t,x,h^{2}p_{t},hp_{x})\bigr] \\
    &\quad + r^{W}_{0}(\kappa,t,x,p_{t},p_{x}),
  \end{split}
\end{equation*}
Then $r_{1}$ satisfies, for any
$\gamma = (\alpha,\tilde{\alpha},\beta,\tilde{\beta}) \in \mathbb{N}^{2(1+d)}_{0}$,
\begin{equation*}
  \bigl|
    \partial^{\alpha}_{t}
    \partial^{\tilde{\alpha}}_{x}
    \partial^{\beta}_{\tau}
    \partial^{\tilde{\beta}}_{\xi}
    r_{1}(\kappa,t,x,\tau,\xi)
  \bigr|
  \le C_{\gamma}\, h^{2\epsilon +|\tilde{\beta}| +2|\beta|}
\end{equation*}
for all $\kappa \in [0,1]$, $(t,x) \in \Gamma$,
$(\tau,\xi) \in \mathbb{R}^{1+d}$, and sufficiently small $h>0$.
Iterating this procedure, we obtain symbols $b_{j}$, $j=2,3,\ldots$, such that
\begin{equation*}
  \bigl|\partial^{\alpha} b_{j}(\kappa,t,x,\tau,\xi)\bigr|
  \le C_{\alpha}
\end{equation*}
for all $\kappa \in [0,1]$, $(t,x) \in \Gamma$, and
$(\tau,\xi) \in \mathbb{R}^{1+d}$, and the corresponding remainders 
\[
\begin{split}
  r_{j}^{W} (\kappa,t,x,p_{t},p_{x})&= 
   \tfrac{\partial}{\partial \kappa}
    h^{\epsilon j} b^{W}_{j}(\kappa,t,x,h^{2}p_{t},hp_{x})
    + i\bigl[L(\kappa),
      h^{\epsilon j} b^{W}_{j}(\kappa,t,x,h^{2}p_{t},hp_{x})\bigr] \\
    &\quad + r^{W}_{j-1}(\kappa,t,x,p_{t},p_{x}),
\end{split}
\]
are $O(h^{\epsilon (j+1) })$ for any $j \ge 2$.
We then define
\[
  b(\kappa,t,x,\tau,\xi)
  \sim \sum_{j=0}^{\infty} h^{\epsilon j} b_{j}(\kappa,t,x,\tau,\xi),
\]
as an asymptotic sum as $h \to 0$.
We may choose $b(\kappa,t,x,\tau,\xi)$ to be supported in
$\operatorname{supp} b_{0}(\kappa,\cdot,\cdot,\cdot,\cdot)$,
since the error is $O(h^{\infty})$.
It is now straightforward to verify that
$b(\kappa,t,x,\tau,\xi)$ satisfies the required properties.
\end{proof}

\subsection{Proof of the main result}

Finally, we prove our main result.

\begin{proof}[Proof of Theorem \ref{250208}]
We take the function $b$ constructed in Proposition~\ref{202512051746}.
By Proposition~\ref{202512091350}, we have
$b(1,s,x_{+},-\tfrac12|\xi_{+}|^{2},\xi_{+}) \neq 0$
for sufficiently small $h>0$,
Moreover, its support is in a small
neighborhood of $(s,x_{+},-\tfrac12|\xi_{+}|^{2},\xi_{+})$.
Taking this fact into account, the proof can be carried out by following
the strategy described in Section~\ref{202512081453}.
\end{proof}


  


\subsubsection*{Funding}
KI was partially supported by JSPS KAKENHI Grant Number~JP23K03163.

\subsubsection*{Competing Interests}
The authors declare that they have no competing interests.

\subsubsection*{Data Availability}
Data sharing is not applicable to this article as no datasets were generated or analysed during the current study.

\end{document}